\def\Z{{\mathbb{Z}}}
\def\Q{{\mathbb{Q}}}
\def\R{{\mathbb{R}}}
\def\N{{\mathbb{N}}}
\def\C{{\mathbb{C}}}
\theoremstyle{plain}
\newtheorem{theorem}{Theorem}[section]
\newtheorem{corollary}[theorem]{Corollary}
\newtheorem{lemma}[theorem]{Lemma}
\newtheorem{proposition}[theorem]{Proposition}
\theoremstyle{definition}
\newtheorem{example}[theorem]{Example}
\newtheorem{rem}[theorem]{Remark}
\def\ps@pprintTitle{%
  \let\@oddhead\@empty
  \let\@evenhead\@empty
  \let\@oddfoot\@empty
  \let\@evenfoot\@oddfoot
}
\title[On a numerical characterization of non-simple ppavs]{On a numerical characterization of non-simple principally polarized abelian varieties}
\author{Robert Auffarth}
\thanks{The author was partially funded by Fondecyt grant No. 3150171}
\address{Departamento de Matem\'aticas, Universidad de Chile, Las Palmeras 3425,\~{N}u\~{n}oa, Santiago, Chile}
\email{rfauffar@mat.puc.cl}
\date{}
\keywords{abelian variety, abelian subvariety, non-simple, N\'eron-Severi, Humbert surfaces}
\begin{document}

\maketitle

\begin{abstract}
To every abelian subvariety of a principally polarized abelian variety $(A, \mathcal{L})$ we canonically associate a numerical class in the N\'eron-Severi group of $A$. We prove that these classes are characterized by their intersection numbers with $ \mathcal{L}$; moreover, the cycle class induced by an abelian subvariety in the Chow ring of $A$ modulo algebraic equivalence can be described in terms of its numerical divisor class. Over the field of complex numbers, this correspondence gives way to an explicit description of the (coarse) moduli space that parametrizes non-simple principally polarized abelian varieties with a fixed numerical class.
\end{abstract}

\vspace{1cm}

\section{Introduction}
\label{sec:introduction}

For many years, mathematicians have been interested in non-simple abelian varieties. Although the general abelian variety is simple, those that contain non-trivial abelian subvarieties appear frequently in nature. For example, if $C$ and $C'$ are smooth projective curves with Jacobians $J$ and $J'$, respectively, and $f:C\to C'$ is a finite morphism, then the pullback $f^*$ induces a homomorphism $f^*:J'\to J$ that has finite kernel. In particular, if $g'$ denotes the genus of $C'$, then $J$ contains an abelian subvariety of dimension $g'$. Non-simple abelian varieties have also been studied in the context of group actions. If $G$ is a group that acts on a polarized abelian variety $(A, \mathcal{L})$ (that is, $G$ acts on $A$ by regular morphisms and $g^* \mathcal{L}$ is numerically equivalent to $ \mathcal{L}$ for every $g\in G$), then irreducible representations of $G$ give way to abelian subvarieties of $A$. This approach to studying non-simple abelian varieties has been very successful and has produced a copious amount of interesting examples and families (see \cite{LR} for the general theory of group actions on abelian varieties and how they define abelian subvarieties, as well as \cite{CR}, \cite{GAR}, \cite{Paulhus} and \cite{Rojas}).

Our point of view for studying the simplicity of abelian varieties is motivated by Kani's work on abelian surfaces in \cite{Kani}. If $(A, \mathcal{L})$ is a principally polarized abelian surface, then an elliptic subgroup $E\leq A$ can be seen as a divisor, and it thus naturally induces a numerical class in $\mbox{NS}(A)$, the N\'eron-Severi group of $A$. Kani proved the following:

\begin{theorem}[Kani]
The map $E\mapsto[E]$ that takes an elliptic curve on a principally polarized abelian surface $(A,\mathcal{L})$ to its corresponding numerical class induces a bijection between
\begin{enumerate}
\item elliptic subgroups $E\leq A$ with $(E\cdot\mathcal{L})=d$
\item primitive numerical classes $\alpha\in\mbox{NS}(A)$ such that $(\alpha^2)=0$ and $(\alpha\cdot\mathcal{L})=d$.
\end{enumerate}
\end{theorem}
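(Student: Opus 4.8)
The plan is to route the entire statement through the dictionary, furnished by the principal polarization, between $\mathrm{NS}(A)$ and symmetric endomorphisms. Since $\phi_{\mathcal L}\colon A\to\hat A$ is an isomorphism, every $\alpha\in\mathrm{NS}(A)$ produces an endomorphism $\psi_\alpha:=\phi_{\mathcal L}^{-1}\phi_\alpha\in\mathrm{End}(A)$, and $\alpha\mapsto\psi_\alpha$ is an isomorphism of $\mathrm{NS}(A)$ onto the lattice of Rosati-symmetric endomorphisms; being a lattice isomorphism, it preserves primitivity. Comparing the self-intersection of $x\alpha+y\mathcal L$ with the product of the analytic eigenvalues of $\psi_\alpha$ (its analytic characteristic polynomial is monic quadratic, as $\dim A=2$), and using $(\mathcal L^2)=2$, I get the two identities $(\alpha\cdot\mathcal L)=\mathrm{tr}_{\mathrm{an}}\psi_\alpha$ and $(\alpha^2)=2\,\mathrm{det}_{\mathrm{an}}\psi_\alpha$. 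Thus the conditions in (2) translate exactly into: $\psi_\alpha$ is a primitive symmetric endomorphism with $\mathrm{det}_{\mathrm{an}}\psi_\alpha=0$ and $\mathrm{tr}_{\mathrm{an}}\psi_\alpha=d$.

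For the forward direction I would first note $(E^2)=0$, since translating $E$ by a point outside it gives a disjoint algebraically equivalent curve; equality $(E\cdot\mathcal L)=d$ is the matching condition. Primitivity of $[E]$ I would obtain topologically: the quotient $\pi\colon A\to B:=A/E$ onto an elliptic curve is a surjective homomorphism, so $\pi^{*}\colon H^{1}(B,\Z)\to H^{1}(A,\Z)$ is a split injection whose image extends to a basis $e_{1}=\pi^{*}a^{*},\,e_{2}=\pi^{*}b^{*},e_{3},e_{4}$ of $H^{1}(A,\Z)\cong\Z^{4}$; then $[E]=\pi^{*}[\mathrm{pt}]=e_{1}\wedge e_{2}$ is a standard basis vector of $\wedge^{2}H^{1}(A,\Z)=H^{2}(A,\Z)$, hence primitive there and a fortiori in the saturated sublattice $\mathrm{NS}(A)$. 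Injectivity is then quick: if $[E]=[E_{0}]$ then $(E\cdot E_{0})=([E]^{2})=0$, so the irreducible curve $E$ meets every fibre of $A\to A/E_{0}$ in degree $0$ and lies in one of them; as the fibres are translates of $E_{0}$ and both curves contain the origin, $E=E_{0}$.

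Surjectivity is the main point. Given primitive $\alpha$ with $(\alpha^{2})=0$ and $(\alpha\cdot\mathcal L)=d>0$, the endomorphism $\psi_\alpha$ is symmetric, nonzero (since $d\neq0$), and satisfies $\mathrm{det}_{\mathrm{an}}\psi_\alpha=0$, so it has analytic rank $1$; hence its image $E:=\mathrm{Im}(\psi_\alpha)$ is a one-dimensional abelian subvariety, i.e. an elliptic subgroup. It remains to prove $[E]=\alpha$. For this I would compare $\psi_\alpha$ with the symmetric endomorphism $\psi_{[E]}$ attached to $[E]$. Both are self-adjoint (with respect to the form induced by $\mathcal L$) of analytic rank $1$, and I claim both have image the tangent line of $E$: for $\psi_{[E]}$ this is exactly the statement that it coincides, up to a positive rational scalar, with the norm endomorphism of $E$. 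A self-adjoint rank-one operator is determined by its image line up to a scalar, so $\psi_\alpha$ and $\psi_{[E]}$ are proportional, whence $\alpha$ and $[E]$ are proportional in $\mathrm{NS}(A)\otimes\Q$; primitivity of both classes together with $(\alpha\cdot\mathcal L),([E]\cdot\mathcal L)>0$ then forces $\alpha=[E]$.

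The hard part is the last identification, namely pinning down the geometry of the symmetric endomorphisms: that $\psi_{[E]}$ has image exactly the tangent line of $E$ (equivalently, that it is a multiple of the norm endomorphism of $E$), and dually that the subvariety $\mathrm{Im}(\psi_\alpha)$ produced from an abstract isotropic class carries a class proportional to $\alpha$. This is precisely the bridge between the linear algebra of the Rosati-symmetric endomorphisms and the subvariety geometry, and I would secure it by invoking the correspondence between abelian subvarieties of $(A,\mathcal L)$ and symmetric idempotents of $\mathrm{End}_{\Q}(A)$; once that image computation is in place, primitivity upgrades the resulting proportionality to the desired equality of classes.
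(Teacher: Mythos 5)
Your framework---passing through the isomorphism $\alpha\mapsto\psi_\alpha:=\phi_{\mathcal{L}}^{-1}\phi_\alpha$ onto Rosati-symmetric endomorphisms, computing $(\alpha\cdot\mathcal{L})=\mathrm{tr}_{\mathrm{an}}(\psi_\alpha)$ and $(\alpha^2)=2\det_{\mathrm{an}}(\psi_\alpha)$, and using primitivity to upgrade a proportionality to an equality---is sound, and it is the same mechanism this paper uses for its generalization (Theorem~\ref{numchar}); your primitivity argument via $H^1$ of the quotient and your injectivity argument are also fine. But the step you yourself single out as ``the hard part'' is stated backwards, and as written the surjectivity proof fails. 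It is \emph{false} that $\psi_{[E]}$ has image the tangent line of $E$, or that it is a positive rational multiple of the norm endomorphism $N_E$. For $x\in E$ one has $t_x^*\mathcal{O}_A(E)=\mathcal{O}_A(E-x)=\mathcal{O}_A(E)$, so $E\subseteq\ker\phi_{[E]}$; hence $\psi_{[E]}$ annihilates $E$, and being symmetric of analytic rank one its image is the \emph{complementary} elliptic curve of $E$. Concretely, if $A=E_1\times E_2$ with the product principal polarization and $E=E_1\times\{0\}$, then $\mathcal{O}_A(E)\cong p_2^*\mathcal{O}_{E_2}(0)$ and $\psi_{[E]}$ is the projection onto the second factor, with image $\{0\}\times E_2\neq E$. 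This duality is exactly what the paper is organized around: the class $\delta_X$ determined by $\phi_{\delta_X}=N_X$ is, for an elliptic curve, the class of the \emph{complementary} abelian subvariety (see Lemma~\ref{importantlemma} and the paper's example on elliptic subgroups), so Kani's map $E\mapsto[E]$ is the paper's canonical map $\Delta$ composed with the complementary involution $E\mapsto Z_E$.

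Consequently, in your surjectivity argument the curve $E:=\mathrm{Im}(\psi_\alpha)$ is the wrong curve: in the product example with $\alpha=[E_1\times\{0\}]$ your recipe outputs $\{0\}\times E_2$, whose class is not $\alpha$ (it is $d[\mathcal{L}]-\alpha$, a class satisfying the same numerical conditions but distinct from $\alpha$). Your proportionality step then collapses, because $\psi_\alpha$ and $\psi_{[E]}$ have complementary, not equal, images, so they are not proportional. The repair is small once the duality is reversed: set $E:=(\ker\psi_\alpha)_0$, the connected component of the kernel, equivalently the complementary subvariety of $\mathrm{Im}(\psi_\alpha)$. Then $\psi_{[E]}$ and $\psi_\alpha$ are both symmetric of analytic rank one with the same connected kernel, hence (by self-adjointness with respect to the Hermitian form of $\mathcal{L}$) with the same image, hence proportional as rank-one operators; your primitivity-plus-positivity argument then gives $[E]=\alpha$, and together with injectivity and the forward direction this completes the bijection. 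With that single reversal your proof goes through.
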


Here primitive means that the class is not a non-trivial multiple of another class. When the dimension of $A$ is greater than 2, this technique obviously does not work for studying elliptic curves on $A$. However, if we wish to study codimension 1 abelian subvarieties, a similar technique can be used that was studied in \cite{Auff}. Concretely, if $Z\leq A$ is a codimension 1 abelian subvariety on a principally polarized abelian variety (ppav) of dimension $n$, then the class $[Z]$ it defines in $\mbox{NS}(A)$ is primitive and satisfies 
$$\begin{array}{ll}(Z\cdot \mathcal{L}^{n-1})>0&\\(Z^r\cdot \mathcal{L}^{n-r})=0,&r\geq2.\end{array}$$
Moreover, these conditions completely characterize those numerical classes that come from abelian subvarieties of codimension 1. If we wish to study elliptic curves on $A$, then we can use the Poincar\'e Irreducibility Theorem that says that for every abelian subvariety $X\leq A$ there exists a complementary abelian subvariety $Y$ (that can be canonically defined using $ \mathcal{L}$) such that the addition map $X\times Y\to A$ is an isogeny. In particular, if $E\leq A$ is an elliptic subgroup, then its complementary abelian subvariety $Z_E\leq A$ is of codimension 1, and we obtain a function
$$E\mapsto[Z_E]\in\mbox{NS}(A).$$
The following theorem was proven in \cite{Auff}:

\begin{theorem}[Auffarth]
The map $E\mapsto[Z_E]$ induces a bijection between
\begin{enumerate}
\item elliptic subgroups $E\leq A$ with $(E\cdot\mathcal{L})=d$
\item primitive classes $\alpha\in\mbox{NS}(A)$ such that 
$$(\alpha^r\cdot \mathcal{L}^{n-r})=\left\{\begin{array}{ll}(n-1)!d&r=1\\0&r\geq2\end{array}\right.$$
\end{enumerate}
\end{theorem}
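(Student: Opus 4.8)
The plan is to reduce everything to the codimension-one characterization quoted above (proved in \cite{Auff}) via Poincar\'e complementation, so that the only genuinely new input is an intersection-number computation relating $(E\cdot\mathcal{L})$ to $(Z_{E}\cdot\mathcal{L}^{n-1})$. First I would observe that complementation $E\mapsto Z_{E}$ is an involutive bijection between elliptic subgroups of $A$ and codimension-one abelian subvarieties of $A$: the complement of an elliptic curve is a $Z$ of dimension $n-1$, the complement of such a $Z$ is again an elliptic curve, and the two constructions are mutually inverse because the polarization used to define complements is fixed. Composing this with the bijection $Z\mapsto[Z]$ of \cite{Auff} between codimension-one abelian subvarieties and primitive classes $\alpha$ satisfying $(\alpha^{r}\cdot\mathcal{L}^{n-r})=0$ for $r\ge2$ and $(\alpha\cdot\mathcal{L}^{n-1})>0$ already exhibits $E\mapsto[Z_{E}]$ as a bijection onto the set of such classes. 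What remains is to show that under this correspondence the normalization $(E\cdot\mathcal{L})=d$ is equivalent to $(\alpha\cdot\mathcal{L}^{n-1})=(n-1)!\,d$; the conditions for $r\ge2$ are already part of the codimension-one characterization.

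The heart of the argument is therefore the identity $(Z_{E}\cdot\mathcal{L}^{n-1})=(n-1)!\,(E\cdot\mathcal{L})$, which I would establish by pulling back to the product via the isogeny $\phi\colon E\times Z\to A$, $\phi(e,z)=e+z$, where $Z=Z_{E}$. Two standard facts about complementary subvarieties drive the computation: numerically $\phi^{*}\mathcal{L}\equiv p_{E}^{*}(\mathcal{L}|_{E})+p_{Z}^{*}(\mathcal{L}|_{Z})$ (the polarization splits as an exterior sum), and $\phi^{-1}(Z)=(E\cap Z)\times Z$, so that $\phi^{*}[Z]\equiv(\deg\phi)\,p_{E}^{*}[\mathrm{pt}]$ since $\phi$ is \'etale and each of the $\#(E\cap Z)=\deg\phi$ components is a fibre of $p_{E}$. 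Expanding $(\phi^{*}\mathcal{L})^{n-1}$ by the binomial theorem and using $(\mathcal{L}|_{E})^{2}=0$ on the curve $E$ kills every term of $p_{E}^{*}(\mathcal{L}|_{E})$-degree at least two; multiplying by $p_{E}^{*}[\mathrm{pt}]$ then kills the remaining cross term because $[\mathrm{pt}]\cdot(\mathcal{L}|_{E})=0$ on $E$. Only $p_{E}^{*}[\mathrm{pt}]\cdot p_{Z}^{*}((\mathcal{L}|_{Z})^{n-1})$ survives, and the projection formula together with $\deg\phi\cdot(Z\cdot\mathcal{L}^{n-1})=\phi^{*}[Z]\cdot(\phi^{*}\mathcal{L})^{n-1}$ yields $(Z\cdot\mathcal{L}^{n-1})=((\mathcal{L}|_{Z})^{n-1})_{Z}=(n-1)!\,\chi(\mathcal{L}|_{Z})$.

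To finish I would identify $\chi(\mathcal{L}|_{Z})$ with $d$. On the elliptic curve $E$ one has $\chi(\mathcal{L}|_{E})=\deg(\mathcal{L}|_{E})=(E\cdot\mathcal{L})=d$, so it suffices to know that complementary abelian subvarieties of a principally polarized abelian variety carry induced polarizations of equal Euler characteristic, i.e. $\chi(\mathcal{L}|_{E})=\chi(\mathcal{L}|_{Z})$. This follows from the standard identification $E\cap Z=K(\mathcal{L}|_{E})$ (Birkenhake--Lange): comparing $\chi(\phi^{*}\mathcal{L})=\deg\phi\cdot\chi(\mathcal{L})=\deg\phi$ with $\chi(\phi^{*}\mathcal{L})=\chi(\mathcal{L}|_{E})\,\chi(\mathcal{L}|_{Z})$ and $\deg\phi=\#(E\cap Z)=\#K(\mathcal{L}|_{E})=\chi(\mathcal{L}|_{E})^{2}=d^{2}$ gives $\chi(\mathcal{L}|_{Z})=d$. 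Equivalently, $K(\mathcal{L}|_{Z})\cong(\Z/d\Z)^{2}$ forces the type of $\mathcal{L}|_{Z}$ to be $(1,\dots,1,d)$. With $(Z_{E}\cdot\mathcal{L}^{n-1})=(n-1)!\,d$ in hand, well-definedness, injectivity and surjectivity of $E\mapsto[Z_{E}]$ onto the classes in (2) all follow from the corresponding properties in \cite{Auff} together with the involutivity of complementation: a class $\alpha$ as in (2) has $(\alpha\cdot\mathcal{L}^{n-1})=(n-1)!\,d>0$, hence equals $[Z]$ for a unique codimension-one $Z$, whose complementary elliptic curve $E$ then satisfies $(E\cdot\mathcal{L})=d$ by the displayed identity.

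The main obstacle I anticipate is the crux computation of the second paragraph, and in particular pinning down its two inputs cleanly: that $\phi^{*}\mathcal{L}$ splits as an exterior sum, and that the multiplicities in $\phi^{*}[Z]$ are all one, so that no spurious factor coming from the type of $\mathcal{L}|_{Z}$ creeps in. A secondary point requiring care is the passage from $\chi(\mathcal{L}|_{Z})$ to the intersection number through the normalization $((\mathcal{L}|_{Z})^{n-1})_{Z}=(n-1)!\,\chi(\mathcal{L}|_{Z})$, together with the verification that $[Z]$ really is the reduced class, so that the constant $(n-1)!$ rather than a proper multiple of it appears.
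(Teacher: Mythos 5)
Your proof is correct, but it follows a genuinely different route from the paper's. The paper does not prove this statement in isolation: it subsumes it in Theorem~\ref{numchar}, whose proof is endomorphism-theoretic. There one computes the characteristic polynomial $P_{N_X}(t)=t^{2n-2u}(t-d)^{2u}$, converts it via Riemann--Roch ($\deg(m-N_X)=\bigl((m[\mathcal{L}]-\delta_X)^n/n!\bigr)^2$) into the intersection numbers by comparing coefficients, and, conversely, shows that a primitive class $\alpha$ with those numbers satisfies $\phi_\alpha^2=d\phi_\alpha$ (the minimal-polynomial lemma) and hence is a norm endomorphism by the criterion in \cite{BL}; the elliptic-curve statement is then the case $u=1$ together with the identification $\delta_E=[Z_E]$ (the paper's Example on elliptic subgroups, or the Chow-ring clause of Theorem~\ref{numchar}). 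You instead take the codimension-one characterization of \cite{Auff} as a black box, compose it with the involutive bijection $E\mapsto Z_E$, and supply the one missing normalization $(Z_E\cdot\mathcal{L}^{n-1})=(n-1)!\,(E\cdot\mathcal{L})$ via $\chi(\mathcal{L}|_{Z_E})=d$, using $E\cap Z_E\cong\ker\phi_{\mathcal{L}|_E}$. This is legitimate---the codimension-one result is a separate theorem of \cite{Auff}, quoted earlier in the introduction---and it is in fact closer in spirit to how \cite{Auff} itself proceeds than to this paper's argument. Two small points: your second paragraph is an unnecessary detour, since $(Z\cdot\mathcal{L}^{n-1})=((\mathcal{L}|_Z)^{n-1})_Z$ is just the projection formula for the inclusion $Z\hookrightarrow A$, and the isogeny $E\times Z\to A$ is genuinely needed only for $\chi(\mathcal{L}|_Z)=d$; moreover, your appeal to \'etaleness of that isogeny is a characteristic-zero fact (its kernel can be non-reduced in positive characteristic), which the projection-formula shortcut avoids. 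As for what each approach buys: yours is shorter and more elementary granted the codimension-one theorem, but it is locked to the curve/divisor duality and cannot see subvarieties of intermediate dimension; the paper's norm-endomorphism proof works uniformly for every dimension $u$ and exponent $d$, over any algebraically closed field, and yields the Chow-class formula as a byproduct---which uniformity is precisely the generalization the paper is after.
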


The purpose of this paper is to generalize the previous theorems and characterize \emph{all} abelian subvarieties numerically. Assuming $(A, \mathcal{L})$ to be a ppav, consider the following two well-known facts:

\begin{enumerate}
\item The N\'eron-Severi group is isomorphic to the group of endomorphisms on $A$ that are fixed by the Rosati involution
\item For every abelian subvariety $X\leq A$ there exists a \emph{norm endomorphism} $N_X\in\mbox{End}(A)$ that characterizes $X$ (see below for details).
\end{enumerate}

One pleasant fact about the norm endomorphism of $X$ is that it is fixed by the Rosati involution, and therefore via the isomorphism from $(i)$ we can associate to $X$ a numerical class $\delta_X\in\mbox{NS}(A)$.

We recall that the \emph{exponent} of an abelian subvariety $X$ is the exponent of the finite group $\ker(\phi_{ \mathcal{L}|_X})$, where for a line bundle (or numerical class) $M$ on an abelian variety $B$, $\phi_M:B\to\mbox{Pic}^0(B)= B^\vee$ denotes the morphism $x\mapsto t_x^*M\otimes M^{-1}$, where $t_x$ is translation by $x$. 

It turns out that if $d$ is the exponent of $X$, then $\delta_X=\frac{1}{d}[N_X^* \mathcal{L}]$, where $N_X$ is the norm endomorphism of $X$ (see Lemma~\ref{importantlemma} or \cite[Exercise 5.6.15]{BL}). When $X$ is an elliptic curve, $\delta_X$ coincides with the class of the complementary abelian subvariety of $X$ and therefore the assignment $X\mapsto\delta_X$ generalizes the technique found in \cite{Auff}. The principal theorems established in \cite{Kani} and \cite{Auff} can be generalized to:

\begin{theorem}
The map $X\mapsto \frac{1}{d}[N_X^* \mathcal{L}]$ gives a bijection between
\begin{enumerate}
\item abelian subvarieties of dimension $u$ and exponent $d$
\item primitive classes $\alpha\in\mbox{NS}(A)$ that satisfy
$$(\alpha^r\cdot \mathcal{L}^{n-r})=\left\{\begin{array}{ll}(n-r)!r!\binom{u}{r}d^r&1\leq r\leq u\\0&u+1\leq r\leq n\end{array}\right.$$
\end{enumerate}
Moreover, if $Y$ is the complementary abelian subvariety of $X$ in $A$ and $[Y]$ is its class in the Chow ring modulo algebraic equivalence, we have the equality
$$\delta_X^u=\frac{u!(n-u)!d^u}{(\mathcal{L}^{n-u}\cdot Y)}[Y].$$
\end{theorem}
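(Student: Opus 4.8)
The plan is to pass through the classical dictionary between $\mathrm{NS}(A)$ and the Rosati-fixed endomorphisms. Under the isomorphism $\mathrm{NS}(A)\otimes\Q\cong\mathrm{End}^s(A)\otimes\Q$, $M\mapsto\phi_{\mathcal L}^{-1}\phi_M$, the polarization $\mathcal L$ goes to the identity and, since $N_X$ is symmetric and satisfies $N_X^2=dN_X$, the class $\delta_X=\frac1d[N_X^*\mathcal L]$ corresponds exactly to $N_X$. The first thing I would record is that $\varepsilon_X:=\frac1dN_X$ is a symmetric idempotent whose analytic (tangent) representation is a projector with image $T_0X$ and kernel $T_0Y$; hence the eigenvalues of $N_X$ on $T_0A$ are $d$ with multiplicity $u=\dim X$ and $0$ with multiplicity $n-u=\dim Y$.

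For the intersection numbers I would use the standard formula expressing mixed intersections of the polarization with a symmetric class in terms of the associated endomorphism: if $\alpha\leftrightarrow f$ has tangent eigenvalues $\lambda_1,\dots,\lambda_n$ then
\[(\mathcal L^{n-r}\cdot\alpha^r)=(n-r)!\,r!\,e_r(\lambda_1,\dots,\lambda_n),\]
which one checks by expanding $((t\mathcal L+s\alpha)^n)=n!\det(tI+sH)$ and comparing coefficients. Feeding in the eigenvalues of $N_X$ gives $e_r=\binom{u}{r}d^r$ for $r\le u$ and $e_r=0$ for $r>u$, which is exactly the asserted list of intersection numbers; primitivity of $\delta_X$ I would deduce from the minimality of the exponent in the definition of $N_X$ (if $\delta_X=m\beta$ with $m\ge2$ and $\beta\in\mathrm{NS}(A)$, the corresponding $g$ would give $\tfrac dm\varepsilon_X\in\mathrm{End}(A)$, contradicting minimality). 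Injectivity is immediate since $X=\mathrm{im}(N_X)$ is recovered from $\delta_X$. For surjectivity I would run this backwards: a primitive $\alpha$ with the stated intersection numbers corresponds to a symmetric $f$ whose elementary symmetric functions are forced to be $e_r=\binom{u}{r}d^r$; since $f$ is Rosati-fixed its tangent eigenvalues are real, so they are determined by the $e_r$ and equal $d$ (mult $u$) and $0$ (mult $n-u$). Thus the minimal polynomial of the tangent representation divides $t(t-d)$, whence $f^2=df$ as an endomorphism, $\frac1df$ is a symmetric idempotent defining an abelian subvariety $X$ of dimension $u$, and primitivity forces $d$ to be its exponent and $\alpha=\delta_X$.

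For the Chow-theoretic identity I would pull everything back along the complementarity isogeny $p:X\times Y\to A$, which is injective on Chow groups modulo algebraic equivalence with $\Q$-coefficients because $p_*p^*=\deg(p)$. Since $Y$ is complementary to $X$, the two subvarieties are orthogonal for $\mathcal L$, so $p^*\mathcal L\equiv\mathrm{pr}_X^*(\mathcal L|_X)+\mathrm{pr}_Y^*(\mathcal L|_Y)$ with no cross term. Because $N_X$ is multiplication by $d$ on $X$ and zero on $Y$, one has $N_X\circ p=[d]_A\circ\iota_X\circ\mathrm{pr}_X$, so modulo algebraic equivalence $p^*\delta_X=\frac1d(N_X\circ p)^*\mathcal L=d\,\mathrm{pr}_X^*(\mathcal L|_X)$ (using that $[d]^*$ acts by $d^2$ on $\mathrm{NS}$); raising to the $u$-th power and using that the top self-intersection $(\mathcal L|_X)^u$ is $u!\,\chi(\mathcal L|_X)$ times the class of a point gives
\[p^*(\delta_X^u)=d^u\,u!\,\chi(\mathcal L|_X)\,[\mathrm{pt}_X]\boxtimes 1.\]
On the other hand $p^{-1}(Y)$ is a union of $\deg(p)=|X\cap Y|$ torsion translates of $\{0\}\times Y$, all algebraically equivalent, so $p^*[Y]=|X\cap Y|\,[\mathrm{pt}_X]\boxtimes 1$. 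Dividing and then simplifying with the two identities $|X\cap Y|=\deg(p)=\chi(\mathcal L|_X)\chi(\mathcal L|_Y)$ (from $\chi(p^*\mathcal L)=\deg(p)\chi(\mathcal L)=\deg(p)$ and the product formula applied to the orthogonal decomposition) and $(\mathcal L^{n-u}\cdot Y)=(n-u)!\,\chi(\mathcal L|_Y)$ makes the factor $\chi(\mathcal L|_X)$ cancel and yields exactly
\[\delta_X^u=\frac{u!\,(n-u)!\,d^u}{(\mathcal L^{n-u}\cdot Y)}[Y].\]

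The main obstacle, and the step I would be most careful about, is making the pullback computation legitimate modulo algebraic equivalence rather than merely in cohomology: I must know that $[d]^*$ acts by $d^2$ on divisor classes, that distinct torsion translates of $\{0\}\times Y$ are algebraically equivalent, and that $(\mathcal L|_X)^u$ is a numerical multiple of the class of a point in $\mathrm{CH}_0(X)$ modulo algebraic equivalence (which is $\Q$ after tensoring) — each of these is standard but must be invoked in the right group. A secondary delicate point is the surjectivity half of the bijection, where one must pass from the numerical data (the $e_r$) back to an honest endomorphism identity $f^2=df$ and then correctly match the exponent $d$ with the primitive generator, which is precisely where the primitivity hypothesis is consumed.
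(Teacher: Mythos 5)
Your argument is essentially correct over $\C$ (equivalently in characteristic zero), but it does not prove the theorem as the paper states it: the result lives in Section 2 of the paper, where $(A,\mathcal{L})$ is a ppav over an \emph{arbitrary} algebraically closed field $k$, and every key tool you use is complex-analytic. Concretely: (i) you identify the eigenvalues of $N_X$ and of the endomorphism $f$ attached to a class $\alpha$ via the \emph{tangent} (analytic) representation, but in characteristic $p$ the tangent representation is not faithful and carries far too little information (e.g.\ $[p]$ acts as $0$ on $T_0A$ while $[p]^*$ acts as $p^2$ on $\mathrm{NS}(A)$), so neither the intersection-number formula $(\mathcal{L}^{n-r}\cdot\alpha^r)=(n-r)!\,r!\,e_r(\lambda_i)$ nor the implication ``min poly of the tangent rep divides $t(t-d)$ $\Rightarrow f^2=df$'' survives; (ii) the identity $((t\mathcal{L}+s\alpha)^n)=n!\det(tI+sH)$ and the deduction ``Rosati-fixed $\Rightarrow$ self-adjoint for the positive definite Hermitian form $\Rightarrow$ real eigenvalues, diagonalizable'' are statements about the Hermitian-form picture of $\mathrm{NS}(A)$ over $\C$ and have no meaning over a general $k$. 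The paper's proof is built precisely to avoid this: it replaces tangent eigenvalues by the characteristic polynomial $P_f(t)$ defined via $P_f(m)=\deg(f-m)$ and computed on the Tate module (Milne, Thm.\ 10.9), gets the intersection numbers from Riemann--Roch in the form $\deg(m-N_X)=\left((m[\mathcal{L}]-\delta_X)^n/n!\right)^2$ (fixing the sign of the square root by ampleness for $m\gg0$), and replaces your diagonalizability step by the lemma that a \emph{symmetric} endomorphism with characteristic polynomial $t^{2n-2u}(t-d)^{2u}$ has minimal polynomial $t(t-d)$, proved via positivity of the Rosati trace form $(f,g)\mapsto\mathrm{Tr}(fg^\dagger)$ on $\Q[\phi]$ --- the characteristic-free surrogate for ``self-adjoint, hence diagonalizable with real eigenvalues.'' If you substitute these two ingredients, the rest of your argument (primitivity of $N_X$ from minimality of the exponent, the norm-endomorphism criterion of Birkenhake--Lange to close the surjectivity step) goes through verbatim and coincides with the paper's.

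Two further remarks on the parts that are fine. Your proof of the Chow-theoretic identity is a genuinely different route from the paper's and is correct (over any field, in fact, once $\delta_X$ is in hand): you ascend along the isogeny $p:X\times Y\to A$ and use $p_*p^*=\deg p$ together with $\chi(p^*\mathcal{L})=\deg(p)$, whereas the paper descends along $p_Y:A\to A/Y$, writes $\delta_X=p_Y^*[\tilde{\mathcal{L}}_X]$ so that $\delta_X^u=m_Y\,p_Y^*[\{0\}]=m_Y[Y]$ with $m_Y$ a non-negative \emph{integer}, and then evaluates $m_Y$ by intersecting with $\mathcal{L}^{n-u}$. The paper's version buys slightly more: your argument, since it divides by $\deg p$, proves the displayed equality in $\mathfrak{A}^*(A)\otimes\Q$ (i.e.\ up to torsion), while the paper's gives the integral statement and, as a byproduct, the integrality of the coefficient $u!(n-u)!d^u/(\mathcal{L}^{n-u}\cdot Y)$. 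Finally, your explicit treatment of primitivity of $\delta_X$ in the forward direction is a point the paper leaves implicit (it is absorbed into the citation of the norm-endomorphism criterion), so that addition is welcome.
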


This theorem has several pleasant consequences, such as the fact that the N\'eron-Severi group of a ppav along with its intersection pairing completely determine the abelian subvarieties that appear in $A$. This idea was already present in Bauer's work \cite{Bauer}, where he shows that non-trivial abelian subvarieties can be detected numerically. We mention the relation between his work and ours in Remark~\ref{Bauer}.

When working over the complex numbers, we show that this correspondence is quite explicit, and show how it lets us describe the coarse moduli space of non-simple ppavs. In particular, we relate the above theory to a moduli construction done by Debarre \cite{Deb} that describes the moduli space of non-simple ppavs. The complex case for $n=2$ was studied analytically by Humbert \cite{Humbert} (this is where the famous Humbert surfaces come from). The results of Section 3 substantially generalize Humbert's results.\\

\noindent\textit{Acknowledgements:} I would like to thank Anita Rojas for many helpful discussions.

\section{Abelian subvarieties and numerical classes}

Let $(A,\mathcal{L})$ be a principally polarized abelian variety (ppav) of dimension $n$ defined over an algebraically closed field $k$, where $\mathcal{L}$ is a line bundle on $A$ with $h^0(A,L):=\dim H^0(A,\mathcal{L})=1$. The \emph{N\'eron-Severi group} of $A$ is the finitely generated abelian group $\mbox{NS}(A):=\mbox{Pic}(A)/\mbox{Pic}^0(A)$.
 
If $M\in\mbox{Pic}(A)$, we will denote by $\phi_M$ the morphism $A\to A^\vee$ where $x\mapsto t_x^*M\otimes M^{-1}$. This morphism depends only on the algebraic equivalence class of $M$, and so we can just as easily define the morphism $\phi_\alpha$ for any class $\alpha\in\mbox{NS}(A)$. Via $ \mathcal{L}$, we identify $A$ with $A^\vee$, and so we will consider $\phi_M$ to be an endomorphism of $A$. We denote by $\sigma^\dagger$ the Rosati involution of an endomorphism $\sigma$ with respect to $ \mathcal{L}$; that is, $\sigma^\dagger:=\sigma^\vee\sigma$ where $\sigma^\vee$ is the dual morphism of $\sigma$. It is well-known that $\mbox{NS}(A)\simeq\mbox{End}^s(A)$, where $\mbox{End}^s(A)$ denotes the group of endomorphisms of $A$ fixed by $\dagger$.

Let $X\leq A$ be an abelian subvariety of $A$, and let $N_X:=j\psi_{ \mathcal{L}|_X}j^\vee$ be the norm endomorphism associated to $X$, where $j$ is the inclusion $X\hookrightarrow A$ and $\psi_{ \mathcal{L}|_X}$ is the unique isogeny $X^\vee\to X$ such that $\psi_{ \mathcal{L}|_X}\phi_{ \mathcal{L}|_X}$ is multiplication by the exponent of the finite group $\ker\phi_{ \mathcal{L}|_X}$ (the restriction of $ \mathcal{L}$ to $X$ gives an ample divisor on $X$, and so $\ker\phi_{ \mathcal{L}|_X}$ is finite). We define the \emph{exponent of }$X$ to be the exponent of this group. Given an abelian subvariety $X$, let $Y$ be its \emph{complementary abelian subvariety}; that is, $Y$ is the connected component of $\ker N_X$. This is an abelian subvariety of $A$ and the addition map $X\times Y\to A$ is an isogeny.

By \cite{Milne}, Theorem 10.9, if $f$ is an endomorphism of $A$, then there exists a unique monic polynomial $P_f(t)\in\Z[t]$ of degree $2n$ such that for every $m\in\Z$, $P_f(m)=\deg(f-m)$. Moreover, if $p$ is a prime that doesn't divide $\mbox{char}(k)$, then $P_f(t)$ is the characteristic polynomial of the action of $f$ on
$$V_pA=(T_pA)\otimes_{\Z_p}\Q_p,$$ where $A[p^l]$ denotes the group of $p^l$-torsion points of $A$, $T_p:=\lim_{\leftarrow}A[p^l]$ is the $p$-adic Tate module of $A$, $\Z_p$ denotes the ring of $p$-adic integers and $\Q_p$ is the field of $p$-adic numbers. We will call this polynomial the \emph{characteristic polynomial} of $f$.

Let $P_X(t)$ denote the characteristic polynomial of $N_X$. We see that $N_X^2-dN_X=0$, where $d$ is the exponent of $X$, and so the roots of $P_X(t)$ are $0$ and $d$.

\begin{proposition}
If $X\leq A$ is an abelian subvariety of dimension $u$ and exponent $d$, then
$$P_X(t)=t^{2n-2u}(t-d)^{2u}.$$
\end{proposition}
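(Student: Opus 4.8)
The plan is to compute $P_X(t)$ by diagonalizing the action of $N_X$ on the $\Q_p$-vector space $V_pA$ and reading off the multiplicities of its two eigenvalues from a decomposition of $V_pA$ adapted to $X$ and its complementary subvariety $Y$.

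First I would record that, as observed just before the statement, $N_X^2 - dN_X = 0$. Hence the operator $\rho_p(N_X)$ induced by $N_X$ on $V_pA$ is annihilated by $t(t-d)$, and since $d\geq 1$ this polynomial is separable; therefore $\rho_p(N_X)$ is diagonalizable with every eigenvalue lying in $\{0,d\}$. Consequently $P_X(t) = t^a(t-d)^b$ for nonnegative integers $a,b$ with $a+b = 2n$ (the degree of the characteristic polynomial), so it remains only to determine these two multiplicities.

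Next I would pass to the complementary abelian subvariety $Y$, which has dimension $n-u$ because the addition map $X\times Y\to A$ is an isogeny. This isogeny induces an isomorphism of $\Q_p$-vector spaces $V_pA \cong V_pX\oplus V_pY$, with $\dim_{\Q_p}V_pX = 2u$ and $\dim_{\Q_p}V_pY = 2(n-u)$. Under this splitting $\rho_p(N_X)$ preserves each summand: on $V_pX$ it acts as multiplication by $d$, because $N_X\circ j = d\cdot j$ (unwinding $N_X = j\psi_{\mathcal{L}|_X}j^\vee$ together with the identity $j^\vee\phi_{\mathcal{L}}j = \phi_{\mathcal{L}|_X}$ and the defining relation $\psi_{\mathcal{L}|_X}\phi_{\mathcal{L}|_X} = d\cdot\mathrm{id}_X$); and on $V_pY$ it acts as $0$, since $Y$ lies in $\ker N_X$. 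Thus, in a basis adapted to this splitting, $\rho_p(N_X)$ is block-diagonal with a scalar block $d\,I_{2u}$ and a zero block of size $2(n-u)$, giving $b = 2u$ and $a = 2(n-u)$, hence $P_X(t) = t^{2n-2u}(t-d)^{2u}$.

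The step I expect to require the most care is verifying that $N_X$ restricts to multiplication by $d$ on $X$ (equivalently, that the $d$-eigenspace of $\rho_p(N_X)$ is exactly $V_pX$): this is where the principal polarization is used to identify $A$ with $A^\vee$, and one must track that identification faithfully through the definition of the norm endomorphism, the remainder of the argument being formal. Alternatively, one can sidestep the action on $V_pX$ entirely by noting that $\ker\rho_p(N_X) = V_pY$, since the finite part of $\ker N_X$ contributes nothing after tensoring with $\Q_p$; this alone gives $a = \dim_{\Q_p}V_pY = 2(n-u)$ and hence $b = 2u$, yielding the same conclusion.
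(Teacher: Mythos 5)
Your proof is correct and takes essentially the same approach as the paper: both rest on the splitting $V_pA\simeq V_pX\oplus V_pY$ induced by the addition isogeny $X\times Y\to A$, on which $N_X$ acts block-diagonally as $d\cdot\mathrm{id}$ on $V_pX$ and as $0$ on $V_pY$. The only (harmless) divergence is at the end: the paper extracts just the trace $2ud$ from this block form and pins down the multiplicity $r$ in $P_X(t)=t^{2n-r}(t-d)^r$ by comparing coefficients, whereas you read the full characteristic polynomial directly off the block decomposition, which is slightly more direct and also spares you the paper's choice of a special prime with $A[p]\not\subseteq X\cap Y$ (unnecessary anyway, since an isogeny induces an isomorphism on $V_p$ for every $p\neq\mathrm{char}(k)$).
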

\begin{proof}
Let $Y$ be the complementary abelian subvariety of $X$ in $A$ (defined by $ \mathcal{L}$). We have the following commutative diagram:

\centerline{\xymatrix{A\ar[rr]^{(N_X,N_Y)}\ar[d]_{N_X}&&X\times Y\ar[d]^{g}\\
A\ar[rr]^{(N_X,N_Y)}&&X\times Y}}
where $g=\left(\begin{array}{cc}d\mbox{id}_X&0\\0&0\end{array}\right)$. Now $(N_X,N_Y)$ is an isogeny, and so
$$N_X=(N_X,N_Y)^{-1}\left(\begin{array}{cc}d\mbox{id}_X&0\\0&0\end{array}\right)(N_X,N_Y)$$
in $\mbox{End}_\Q(A)$. In particular,
$$\mbox{tr}(N_X)=\mbox{tr}\left(\begin{array}{cc}d\mbox{id}_X&0\\0&0\end{array}\right),$$
where $tr$ denotes the trace function on $\mbox{End}(A)\hookrightarrow\mbox{End}(V_p)$. Let $p$ be a prime such that $p\neq\mbox{char}(k)$ and such that $A[p]\not\subseteq X\cap Y$ ($p$ exists since $X\cap Y$ is finite). In this case, $A[p]\simeq X[p]\oplus Y[p]$, and so
$$V_p A\simeq (T_pZ\oplus T_p Y)\otimes_{\Z_p}\Q_p\simeq V_pZ\oplus V_pY.$$
It is then obvious that $g$ acts on $V_pA$ as $\left(\begin{array}{cc}d\mbox{id}_X&0\\0&0\end{array}\right)$, and so
$$\mbox{tr}(N_X)=\mbox{tr}\left(\begin{array}{cc}d\mbox{id}_X&0\\0&0\end{array}\right)=2ud$$
(since $\dim_{\Q_p}(T_pZ)\otimes_{\Z_p}\Q_p=2u$). Note that this argument was adapted from the argument of \cite{BL} Corollary 5.3.10.

From our previous discussion, we have that
$$P_X(t)=t^{2n-r}(t-d)^{r}=\sum_{m=0}^{r}\binom{r}{m}(-d)^mt^{2n-m}.$$
Moreover, we have that $\mbox{tr}(N_X)$ is precisely $-1$ times the coefficient of $t^{2n-1}$. Putting everything together, we get
$$2ud=\mbox{tr}(N_X)=d\binom{r}{1}=dr$$
and so $r=2u$. 
\end{proof}

The key observation that allows us to study abelian subvarieties using numerical classes is the following:\\

\noindent\textbf{Key Observation:} Since $N_X$ is a symmetric endomorphism, there exists a unique numerical class $\delta_X\in\mbox{NS}(A)$ such that $N_X=\phi_{\delta_X}$.\\

This association then defines a function 
$$\Delta:\{\mbox{abelian subvarieties of }A\}\to\mbox{NS}(A)$$
which we can write explicitly:

\begin{lemma}\label{importantlemma}
$\delta_X=\frac{1}{d}[N_X^* \mathcal{L}]$.
\end{lemma}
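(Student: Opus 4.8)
The plan is to reduce the identity to the standard compatibility between pullback of line bundles and the associated polarization morphisms, namely that for any endomorphism $f$ of $A$ and any line bundle $M$ one has $\phi_{f^*M}=f^\vee\phi_M f$ as genuine maps $A\to A^\vee$. Applying this with $f=N_X$ and $M=\mathcal{L}$ gives $\phi_{N_X^*\mathcal{L}}=N_X^\vee\phi_{\mathcal{L}}N_X$. By the Key Observation the class $\delta_X$ is characterized, under the identification $A\simeq A^\vee$ furnished by $\phi_{\mathcal{L}}$, by $N_X=\phi_{\mathcal{L}}^{-1}\phi_{\delta_X}$, equivalently $\phi_{\delta_X}=\phi_{\mathcal{L}}N_X$ as maps $A\to A^\vee$. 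So the goal becomes to show that $N_X^\vee\phi_{\mathcal{L}}N_X$ equals $d\,\phi_{\mathcal{L}}N_X$.

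Two ingredients accomplish this. First, the symmetry of $N_X$ under the Rosati involution—the very fact used to define $\delta_X$—unwinds to the identity $N_X^\vee\phi_{\mathcal{L}}=\phi_{\mathcal{L}}N_X$. Second, the relation $N_X^2-dN_X=0$ recorded just before the Proposition. Combining them,
\begin{equation*}
\phi_{N_X^*\mathcal{L}}=N_X^\vee\phi_{\mathcal{L}}N_X=\phi_{\mathcal{L}}N_X\cdot N_X=\phi_{\mathcal{L}}N_X^2=d\,\phi_{\mathcal{L}}N_X=d\,\phi_{\delta_X}=\phi_{d\delta_X},
\end{equation*}
where the penultimate equality uses $\phi_{\delta_X}=\phi_{\mathcal{L}}N_X$ and the last uses additivity of $\alpha\mapsto\phi_\alpha$. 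Since the assignment $\alpha\mapsto\phi_\alpha$ from $\mbox{NS}(A)$ to $\mbox{Hom}(A,A^\vee)$ is injective—this is precisely the isomorphism $\mbox{NS}(A)\simeq\mbox{End}^s(A)$—the equality $\phi_{N_X^*\mathcal{L}}=\phi_{d\delta_X}$ forces $[N_X^*\mathcal{L}]=d\delta_X$ in $\mbox{NS}(A)$, which is the claimed identity $\delta_X=\frac{1}{d}[N_X^*\mathcal{L}]$.

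The computation is short, so there is no deep obstacle; the only point requiring genuine care is the bookkeeping around the identification $A\simeq A^\vee$ induced by $\phi_{\mathcal{L}}$. One must consistently distinguish $\phi_{\delta_X}$ as a map $A\to A^\vee$ from $N_X$ as an endomorphism of $A$, the two being related by precomposition with $\phi_{\mathcal{L}}^{-1}$; and one must apply the pullback formula $\phi_{f^*M}=f^\vee\phi_M f$ with $\phi_M$ regarded as the genuine map to $A^\vee$ rather than as the associated endomorphism. Keeping these roles straight, together with translating ``$N_X$ is fixed by the Rosati involution'' into the correct intertwining relation $N_X^\vee\phi_{\mathcal{L}}=\phi_{\mathcal{L}}N_X$, is what turns the formal manipulation above into a correct proof.
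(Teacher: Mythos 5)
Your proof is correct and takes essentially the same route as the paper's: the identical chain $\phi_{N_X^*\mathcal{L}}=N_X^\vee\phi_{\mathcal{L}}N_X=\phi_{\mathcal{L}}N_X^2=d\,\phi_{\mathcal{L}}N_X=\phi_{d\delta_X}$, with your version merely keeping the identification $A\simeq A^\vee$ explicit where the paper suppresses it. The paper's closing appeal to torsion-freeness of $\mbox{NS}(A)$ (to divide by $d$) plays the same role as your appeal to injectivity of $\alpha\mapsto\phi_\alpha$.
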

\begin{proof}
By identifying $A$ with $A^\vee$, we have
$$\phi_{N_X^* \mathcal{L}}=N_X^\vee\phi_ \mathcal{L} N_X=N_X^2=dN_X=\phi_{d\delta_X}.$$
The N\'eron-Severi group of an abelian variety is torsion free, and so the equality follows.
\end{proof}

Since an abelian subvariety is determined by its norm endomorphism, we see that $\Delta$ is injective. Our goal is to study the image of $\Delta$. We know by the properties of norm endomorphisms in \cite{BL}, page 125 (the arguments there are valid over any algebraically closed field), that
$$N_X+N_Y=d\mbox{id},$$
where $d$ is the exponent of $X$. In terms of numerical classes, we get that
$$\delta_X=d[ \mathcal{L}]-\delta_Y,$$
where $[ \mathcal{L}]$ denotes the numerical class of $ \mathcal{L}$. In particular, $[\delta_X]=-[\delta_Y]$ in $\mbox{NS}(A)/\Z[ \mathcal{L}]$ and so the natural function between abelian subvarieties of dimension $u$ and abelian subvarieties of codimension $u$ can be seen as the inverse homomorphism.

\begin{example}
We have that $\delta_{\{0\}}=0$ and $\delta_A=[ \mathcal{L}]$.
\end{example}

\begin{example}
If $X$ is an elliptic subgroup of $A$ (that is, an abelian subvariety of dimension 1), then $\delta_X$ corresponds to the numerical class of the complementary abelian subvariety of $X$ (this can be easily shown by hand, or by using the next theorem). In particular, $\Delta$ generalizes the function described in \cite{Auff}.
\end{example}

\begin{rem}\label{Bauer}
In \cite{Bauer}, Bauer showed that if $X$ is an abelian subvariety of $A$, then
$$\mbox{sup}\{t\in\R:\mathcal{L}-tN_X^*\mathcal{L}\mbox{ is nef}\}=\frac{1}{d^2}\in\Q$$
where $d$ is the exponent of $X$ (here we use additive notation for the line bundles). Moreover, Bauer shows that this characterizes non-simple abelian varieties; that is, if $(A,\mathcal{L})$ is a simple polarized abelian variety and $M\in\mbox{Pic}(A)$, then
$$\sup\{t\in\R:\mathcal{L}-tM\mbox{ is nef}\}$$
is always irrational. When $\mathcal{L}$ is a principal polarization, since 
$$[\mathcal{L}]-\frac{1}{d^2}[N_X^*\mathcal{L}]=[\mathcal{L}]-\frac{1}{d}\delta_X=\frac{1}{d}\delta_Y$$
where $Y$ is the complementary abelian subvariety of $X$, Bauer's result essentially says that if $\delta_A$ can be written as $\delta_A=\alpha+t\beta$ in $\mbox{NS}(A)\otimes\R$ for $\alpha$ nef and $t$ maximal, then $t$ is rational if and only if $\alpha=\frac{1}{d}\delta_X$ and $\beta=d\delta_Y$ for certain non-trivial complementary abelian subvarieties $X,Y\subseteq A$. This shows that the existence of certain numerical classes guarantees that $A$ contains a non-trivial abelian subvariety. We can make this more precise with our theory in what follows.
\end{rem}

Our main theorem shows that abelian subvarieties are characterized by the intersection numbers of their numerical classes.

\begin{theorem}\label{numchar} The map $\Delta:X\mapsto \frac{1}{d}[N_X^* \mathcal{L}]$ gives a bijection between
\begin{enumerate}
\item abelian subvarieties of dimension $u$ and exponent $d$
\item primitive classes $\alpha\in\mbox{NS}(A)$ that satisfy
$$(\alpha^r\cdot \mathcal{L}^{n-r})=\left\{\begin{array}{ll}(n-r)!r!\binom{u}{r}d^r&1\leq r\leq u\\0&u+1\leq r\leq n\end{array}\right.$$
\end{enumerate}
Moreover, if $Y$ is the complementary abelian subvariety of $X$ in $A$, in the Chow ring modulo algebraic equivalence $\frak{A}^*(A)$ we have the equality
$$\delta_X^u=\frac{u!(n-u)!d^u}{(\mathcal{L}^{n-u}\cdot Y)}[Y].$$
\end{theorem}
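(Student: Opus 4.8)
The plan is to route everything through the single identity relating Euler characteristics to degrees of endomorphisms. Writing $f=\phi_\alpha\in\mathrm{End}^s(A)$ and using that $\mathcal{L}$ is principal (so $\phi_{\mathcal{L}}=\mathrm{id}$), for integers $x,y$ the class $x\mathcal{L}+y\alpha$ is integral and Riemann--Roch gives
$$\chi(x\mathcal{L}+y\alpha)=\frac{1}{n!}\sum_{r=0}^{n}\binom{n}{r}(\mathcal{L}^{n-r}\cdot\alpha^r)\,x^{n-r}y^r,$$
while the well-known relation $\chi(M)^2=\deg\phi_M$ yields
$$\chi(x\mathcal{L}+y\alpha)^2=\deg(x\,\mathrm{id}+y f).$$
Since $\deg(x\,\mathrm{id}+yf)=\prod_i(x+y\lambda_i)$ over the eigenvalues $\lambda_i$ of the rational representation of $f$, the right-hand side is governed entirely by the characteristic polynomial $P_f$, and conversely $P_f$ is recovered from $\chi(x\mathcal{L}+y\alpha)$ (the sign of the square root being fixed by $y=0$, where $\chi(x\mathcal{L})=x^n$). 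This dictionary between intersection numbers and characteristic polynomials is what I will use in both directions.

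For the forward direction, let $X$ have dimension $u$ and exponent $d$. The preceding Proposition gives $P_{N_X}(t)=t^{2n-2u}(t-d)^{2u}$, so $\deg(x\,\mathrm{id}+yN_X)=x^{2n-2u}(x+yd)^{2u}$ and hence $\chi(x\mathcal{L}+y\delta_X)=x^{n-u}(x+yd)^u=\sum_{r=0}^{u}\binom{u}{r}d^r x^{n-r}y^r$. Comparing with the Riemann--Roch expansion coefficient by coefficient yields $(\delta_X^r\cdot\mathcal{L}^{n-r})=r!\,(n-r)!\binom{u}{r}d^r$ for $1\le r\le u$ and $0$ for $r>u$, which is exactly condition $(2)$. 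For primitivity I will use that, by \cite{BL}, $d$ is the least positive integer with $d\varepsilon_X\in\mathrm{End}(A)$, where $\varepsilon_X=\tfrac1d N_X$ is the symmetric idempotent of $X$: if $\delta_X=e\beta$ with $\beta\in\mbox{NS}(A)$ and $e>1$, then $\tfrac1e N_X=\phi_\beta\in\mathrm{End}(A)$ forces $e\mid d$, so $\tfrac de\,\varepsilon_X\in\mathrm{End}(A)$ with $\tfrac de<d$, contradicting minimality.

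The reverse direction is where the real work lies. Given a primitive $\alpha$ satisfying $(2)$, the Riemann--Roch expansion collapses to $\chi(x\mathcal{L}+y\alpha)=x^{n-u}(x+yd)^u$, whence $\deg(x\,\mathrm{id}+yf)=x^{2n-2u}(x+yd)^{2u}$ and $P_f(t)=t^{2n-2u}(t-d)^{2u}$. This alone only controls the eigenvalues; the crucial extra input is that $f$ is symmetric for the Rosati involution, which is a \emph{positive} involution, so $\Q[f]$ carries a positive-definite trace form and hence has no nonzero nilpotents. Therefore $f$ is semisimple, its minimal polynomial is squarefree with roots in $\{0,d\}$, and (for $0<u<n$) equals $t(t-d)$; that is, $f^2=df$. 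Then $\varepsilon:=\tfrac1d f$ is a symmetric idempotent, $X:=\mathrm{im}(f)$ is an abelian subvariety with $\dim X=\tfrac12\mathrm{rk}_{\Q}\rho_r(f)=u$, and $f$ is its norm endomorphism; primitivity of $\alpha$ then forces the exponent of $X$ to be exactly $d$ (the least $m>0$ with $m\varepsilon\in\mathrm{End}(A)$ divides $d$, and a proper divisor would present $\alpha$ as a nontrivial integer multiple of an $\mathrm{NS}$-class). With injectivity already noted, $\Delta$ is a bijection. I expect this semisimplicity step, upgrading the characteristic polynomial to the relation $f^2=df$ via positivity of the Rosati involution, to be the main obstacle.

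For the last identity I will pull back along the addition isogeny $\mu:X\times Y\to A$. Decomposing $\mathrm{NS}(X\times Y)$ into its $\mathrm{NS}(X)$, $\mathrm{NS}(Y)$ and $\mathrm{Hom}(X,Y^\vee)$ parts, the restrictions $\iota_X^*\delta_X$ and $\iota_Y^*\delta_X$ are computed from $N_X|_X=d\,\mathrm{id}_X$ and $N_X|_Y=0$ to be $d(\mathcal{L}|_X)$ and $0$, while the cross term $\iota_Y^\vee N_X\iota_X$, governed by the $\mathcal{L}$-orthogonality of the complementary pair $X,Y$, vanishes; thus $\mu^*\delta_X=d\,p_X^*(\mathcal{L}|_X)$. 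Raising to the $u$-th power, $\mu^*(\delta_X^u)=d^u\,p_X^*\!\big((\mathcal{L}|_X)^u\big)$ is a multiple of $[\{0\}\times Y]$ because $(\mathcal{L}|_X)^u$ is a zero-cycle on the $u$-dimensional $X$; likewise $\mu^*[Y]$ is a multiple of $[\{0\}\times Y]$. Since $\mu^*$ is injective on $\mathfrak{A}^*(A)\otimes\Q$, these relations give $\delta_X^u=c\,[Y]$ for some $c\in\Q$. The constant is finally pinned down by intersecting with $\mathcal{L}^{n-u}$: the left side equals $(\delta_X^u\cdot\mathcal{L}^{n-u})=u!\,(n-u)!\,d^u$ by the intersection numbers just established, and the right side is $c\,(\mathcal{L}^{n-u}\cdot Y)$, so $c=\dfrac{u!\,(n-u)!\,d^u}{(\mathcal{L}^{n-u}\cdot Y)}$, as claimed.
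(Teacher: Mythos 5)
Your proposal is correct, and for the bijection it is essentially the paper's own argument: the same Riemann--Roch dictionary $\chi(M)^2=\deg\phi_M$ converting the prescribed intersection numbers into the characteristic polynomial $t^{2n-2u}(t-d)^{2u}$ of $\phi_\alpha$ (the paper uses the one-variable specialization $\deg(m-N_X)=\bigl((m[\mathcal{L}]-\delta_X)^n/n!\bigr)^2$ and fixes the sign of the square root by ampleness for $m\gg0$, where you fix it at $y=0$); the same key semisimplicity step resting on positivity of the Rosati involution (the paper diagonalizes the self-adjoint multiplication operator $T_\phi$ on $\Q[\phi]$ with respect to the trace form, you observe that the positive-definite trace form forces $\Q[f]$ to be reduced --- two packagings of one idea, both yielding the minimal polynomial $t(t-d)$); and the same appeal to the norm-endomorphism criterion of Birkenhake--Lang to produce $X=\mathrm{im}(\phi_\alpha)$ of dimension $u$ and exponent $d$. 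Your explicit check that $\delta_X$ is primitive, via minimality of the exponent as denominator of the symmetric idempotent, fills in a point the paper leaves implicit in its citation of that criterion. Where you genuinely diverge is the Chow-ring identity: the paper notes that $\delta_X$ is pulled back from the $u$-dimensional quotient $p_Y:A\to A/Y$ (since $(\ker\phi_{\delta_X})_0=Y$), so that $\delta_X^u=m_Y\,p_Y^*[\{0\}]=m_Y[Y]$ with $m_Y$ automatically a non-negative integer; you instead pull back along the addition isogeny $\mu:X\times Y\to A$, compute $\mu^*\delta_X=d\,p_X^*[\mathcal{L}|_X]$, and use injectivity of $\mu^*$ on $\mathfrak{A}^*(A)\otimes\Q$ (from $\mu_*\mu^*=\deg\mu$) to conclude $\delta_X^u=c[Y]$ with $c$ a priori only rational. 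Both arguments then pin down the constant against $(\delta_X^u\cdot\mathcal{L}^{n-u})=u!(n-u)!d^u$. The paper's route buys integrality of the coefficient for free; yours avoids the descent of the class through the quotient and stays entirely on $X\times Y$, which is arguably more self-contained.
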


Before proving the theorem we prove a lemma.\\

\begin{lemma}
Let $\phi\in\mbox{End}^s(A)$ be a symmetric endomorphism with characteristic polynomial of the form $P_\phi(t)=t^{2n-2u}(t-d)^{2u}$. Then the minimal polynomial of $\phi$ is $M_\phi(t)=t(t-d)$.
\end{lemma}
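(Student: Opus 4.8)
The plan is to reduce the statement to the semisimplicity of $\phi$, which in turn follows from the positivity of the Rosati involution; once $\phi$ is known to be semisimple, its minimal polynomial must be the squarefree radical of its characteristic polynomial. I would begin with the formal reductions. Since $\phi$ acts $\Q_p$-linearly on $V_pA$ with characteristic polynomial $P_\phi$, Cayley--Hamilton gives $P_\phi(\phi)=0$, so the minimal polynomial $M_\phi\in\Q[t]$ divides $P_\phi$. Its roots are therefore among the eigenvalues $0$ and $d$, and (in the relevant range $1\le u\le n-1$, where both eigenvalues genuinely occur) we must have
$$M_\phi(t)=t^a(t-d)^b,\qquad a,b\ge 1.$$
Everything then comes down to proving $a=b=1$, i.e.\ that $M_\phi$ is squarefree.

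For this I would work inside the commutative subalgebra $\Q[\phi]\subseteq\mbox{End}_\Q(A)$. Because the Rosati involution $\dagger$ is an anti-involution fixing $\Q$ and fixing $\phi$, it fixes every power $\phi^k$ and hence restricts to the identity on $\Q[\phi]$; in particular every element of $\Q[\phi]$ is symmetric. The decisive input is the positivity of the Rosati involution: $\mbox{tr}(\psi\psi^\dagger)>0$ for every nonzero $\psi\in\mbox{End}_\Q(A)$, a fact valid over an arbitrary field (and where $\mbox{tr}$ is the trace on $V_pA$, which differs from the reduced trace by a positive factor). Suppose $\Q[\phi]$ contained a nonzero nilpotent element $\nu$. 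Since $\nu^\dagger=\nu$, positivity would give $\mbox{tr}(\nu^2)>0$; but $\nu^2$ is nilpotent, hence acts nilpotently on $V_pA$ and has trace $0$ there, a contradiction. Therefore $\Q[\phi]$ has no nonzero nilpotents.

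A reduced finite-dimensional commutative $\Q$-algebra $\Q[\phi]\cong\Q[t]/(M_\phi)$ has squarefree $M_\phi$, so $a=b=1$ and $M_\phi(t)=t(t-d)$, as claimed. (The excluded cases are immediate: $u=0$ gives $\phi=0$ and $u=n$ gives $\phi=d\,\mbox{id}$, with minimal polynomials $t$ and $t-d$ respectively.)

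The main obstacle is establishing semisimplicity, and the only genuine ingredient there is the positivity of the Rosati involution; the rest is formal commutative algebra. As an alternative I could diagonalize $\phi$ as a self-adjoint operator for the positive definite form attached to the polarization and read off that its minimal polynomial is a product of distinct linear factors over its eigenvalues, but the no-nilpotents argument above seems cleaner and does not require passing to $\C$ or choosing an explicit basis of $V_pA$ beyond the trace computation.
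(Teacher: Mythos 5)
Your proof is correct, and it reaches the conclusion by a genuinely different mechanism than the paper, although both arguments rest on the same key input: the positivity of the Rosati trace form. The paper also works inside $\Q[\phi]$, but it uses positivity to make $H(f,g)=\mbox{Tr}(fg^\dagger)$ an inner product, checks that multiplication by $\phi$ is self-adjoint with respect to $H$, and then invokes the spectral theorem over $\R[\phi]=\Q[\phi]\otimes\R$ to conclude that the minimal polynomial splits into distinct linear factors. You instead use positivity once, pointwise: since $\dagger$ fixes $\Q[\phi]$ elementwise, a nonzero nilpotent $\nu\in\Q[\phi]$ would satisfy $\mbox{Tr}(\nu\nu^\dagger)=\mbox{Tr}(\nu^2)>0$, which is impossible because $\nu^2$ is nilpotent and hence has trace zero on $V_pA$; therefore $\Q[\phi]\simeq\Q[t]/(M_\phi)$ is reduced and $M_\phi$ is squarefree. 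Your route avoids the base change to $\R$ and the spectral theorem entirely, replacing diagonalization by the elementary commutative-algebra fact that $\Q[t]/(M)$ is reduced if and only if $M$ is squarefree; the paper's route is the classical ``self-adjoint implies diagonalizable'' argument. Two further points in your favor: you make explicit that the faithfulness of the action on $V_pA$ (via Cayley--Hamilton) pins the roots of $M_\phi$ to $\{0,d\}$ and forces both roots to occur when $1\leq u\leq n-1$, a step the paper leaves implicit; and you note that the lemma as literally stated fails in the degenerate cases $u=0$ and $u=n$ (where $M_\phi$ equals $t$ or $t-d$), cases which the paper's proof also silently excludes and which are harmless for the application, since there one only needs $M_\phi\mid t(t-d)$.
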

\begin{proof}
Let $\Q[\phi]$ be the (commutative) subalgebra of $\mbox{End}_\Q(A)$ generated by $1$ and $\phi$, and let $T_\phi\in\mbox{End}(\Q[\phi])$ be multiplication by $\phi$. We know that $H:(f,g)\mapsto\mbox{Tr}(f g^\dagger)$ is a positive definite symmetric bilinear form, and we see that for all $f,g\in\Q[\phi]$
$$H(T_\phi(f),g)=\mbox{Tr}(\phi f g^\dagger)=\mbox{Tr}(f  g^\dagger\phi^\dagger)=\mbox{Tr}(f (\phi g)^\dagger)=H(f,T_\phi(g)),$$
and so $T_\phi$ is self-adjoint with respect to $H$. In particular, $T_\phi$ is diagonalizable on $\R[\phi]=\Q[\phi]\otimes\R$, and so its minimum polynomial splits as the product of distinct linear factors:
$$M_{T_\phi}(t)=\prod_{i=1}^r(t-\lambda_i)$$
where $r=\dim_\Q\Q[\phi]$ and $\lambda_i\in\R$. Evaluating in $T_\phi(1)$, we get that
$$0=\prod_{i=1}^r(\phi-\lambda_i\mbox{id}).$$
Therefore the minimal polynomial of $\phi$ divides $M_{T_\phi}$, and by our hypothesis on the characteristic polynomial of $\phi$, we have that $M_\phi(t)$ must be $t(t-d)$.

\end{proof}

\begin{proof}[Proof of Theorem~\ref{numchar}]
If $X$ is an abelian subvariety of dimension $u$ and exponent $d$, by Riemann-Roch we have 
$$P_{N_X}(m)=m^{2n-2u}(m-d)^{2u}=\deg(N_X-m)=\deg(m-N_X)=\left(\frac{(m[ \mathcal{L}]-\delta_X)^n}{n!}\right)^2.$$
Therefore for $m\gg0$, since $m[ \mathcal{L}]-\delta_X$ is an ample class we obtain that its self intersection is positive, and so
$$m^{n-u}(m-d)^u=\frac{(m[ \mathcal{L}]-\delta_X)^n}{n!}=\frac{1}{n!}\sum_{r=0}^n\binom{n}{r}(-1)^{r}(\delta_X^r \mathcal{L}^{n-r})m^{n-r}.$$
On the other hand,
$$m^{n-u}(m-d)^u=\sum_{r=0}^u\binom{u}{r}(-1)^rd^rm^{n-r}.$$
By comparing the coefficients of these two polynomials we obtain the desired intersection numbers.

Now let $\alpha$ be a primitive class that has the intersection numbers above. By our previous analysis, we have that $P_{\phi_\alpha}(t)=t^{2n-2u}(t-d)^{2u}$. Since $\phi_\alpha$ is a symmetric endomorphism, by the previous lemma we get that $\phi_\alpha^2=d\phi_\alpha$. Moreover $\phi_\alpha$ is primitive since $\alpha$ is, and so by the Norm-endomorphism criterion in \cite{BL} page 124 (the arguments there work over any algebraically closed field), we get that $\phi_\alpha=N_{\mbox{Im}(\phi_\alpha)}$ and $\mbox{Im}(\phi_\alpha)$ is an abelian subvariety of dimension $u$ and exponent $d$.

Since $(\ker\phi_{\delta_X})_0=Y$ (the connected component of $\ker\phi_{\delta_X}$ containing 0), we have that there exists a line bundle $\tilde{\mathcal{L}}_X$ on $A/Y$ such that the numerical class of $\mathcal{L}_X:=p_Y^*\tilde{\mathcal{L}}_X$ is $\delta_X$, where $p_Y:A\to A/Y$ is the natural projection. We see that if we intersect the numerical class of $\tilde{\mathcal{L}}_X$ $u$ times (as algebraic cycles),  we obtain a finite number of points. Since any two points are algebraically equivalent on an abelian variety, we have that $\delta_X^u= m_Yp_Y^*[\{0\}]$ for some integer $m_Y\geq0$ in the Chow ring of $A$ modulo algebraic equivalence. Now, $p_Y^*[\{0\}]=[Y]$, and so we get that
$$\delta_X^u=m_Y[Y]$$
for some integer $m_Y$.

We get that
$$(n-u)!^2u!^2d^{2u}=(\delta_X^u\cdot \mathcal{L}^{n-u})^2=m_Y^2( \mathcal{L}|_Y^{n-u})^2,$$
and so
$m_Y=\frac{u!(n-u)!d^u}{(\mathcal{L}^{n-u}\cdot Y)}.$ 
\end{proof}

The following corollary was of course implicit in \cite{Bauer}, but can be seen more explicitly with the previous theorem.

\begin{corollary}
Simplicity for an abelian variety is a numerical property. In particular, if $(A, \mathcal{L})$ and $(B,\Xi)$ are two ppavs of dimension $n$ such that there exists an isomorphism $\phi:\mbox{NS}(A)\to\mbox{NS}(B)$ that preserves the intersection pairing, then $\phi$ induces a bijection between abelian subvarieties of $A$ and abelian subvarieties of $B$ that preserves dimension and exponents.
\end{corollary}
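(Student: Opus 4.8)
The plan is to reduce the corollary entirely to Theorem~\ref{numchar}, which already records each abelian subvariety as a primitive numerical class cut out by its intersection numbers against the polarization; once that dictionary is in place, transporting abelian subvarieties across $\phi$ should be purely formal. First I would isolate the two properties of $\phi$ that are actually used. As an isomorphism of abelian groups, $\phi$ takes primitive classes to primitive classes, since being a nontrivial integral multiple of another class is preserved in both directions. As a map compatible with the intersection form, $\phi$ satisfies $(\alpha_1\cdots\alpha_n)_A=(\phi(\alpha_1)\cdots\phi(\alpha_n))_B$, so that whenever $\phi([\mathcal{L}])=[\Xi]$ we obtain $(\alpha^r\cdot\mathcal{L}^{n-r})_A=(\phi(\alpha)^r\cdot\Xi^{n-r})_B$ for every class $\alpha$ and every $r$.

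With these two facts, the numerical system of Theorem~\ref{numchar} is preserved verbatim: a primitive class $\alpha\in\mbox{NS}(A)$ realizes the intersection identities for the pair $(u,d)$ relative to $\mathcal{L}$ if and only if $\phi(\alpha)\in\mbox{NS}(B)$ realizes the identical identities for the same $u$ and $d$ relative to $\Xi$. I would then apply Theorem~\ref{numchar} twice. On $A$, an abelian subvariety $X$ of dimension $u$ and exponent $d$ is sent to its primitive class $\delta_X$; the image $\phi(\delta_X)$ is a primitive class satisfying the $(u,d)$-system on $B$, hence by Theorem~\ref{numchar} for $(B,\Xi)$ it equals $\delta_{X'}$ for a unique abelian subvariety $X'\le B$ of dimension $u$ and exponent $d$. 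The rule $X\mapsto X'$ is therefore a bijection preserving dimension and exponent, with inverse induced by $\phi^{-1}$. The headline assertion that simplicity is numerical is then the special case in which no primitive class satisfies the conditions of Theorem~\ref{numchar} for any $1\le u\le n-1$, a statement made solely in terms of intersection numbers against the polarization.

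The step I expect to be the real obstacle is the compatibility $\phi([\mathcal{L}])=[\Xi]$, because the entire characterization is written relative to the polarization and the intersection identities above are empty until the polarization class is located. I would address this by arguing that the polarization is numerically distinguished and hence necessarily matched: $[\mathcal{L}]=\delta_A$ is the class attached to the full abelian variety $A$, the unique abelian subvariety of dimension $n$ and exponent $1$, and likewise $[\Xi]=\delta_B$; since $\phi$ preserves the full intersection form, I would show it must carry this distinguished principal class to the distinguished principal class, so that an intersection-preserving isomorphism automatically respects the polarizations. Making this last matching rigorous---ruling out that $\phi$ could send $[\mathcal{L}]$ to some other class with the same numerical profile, which is where ampleness and the positivity of the pairing must enter---is the one place the argument is not purely formal, and it is the point I would treat with the most care.
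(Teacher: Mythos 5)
Your first two paragraphs reproduce exactly the argument the paper intends: the paper in fact gives no proof of this corollary at all (it is presented as immediate from Theorem~\ref{numchar}), and, granted the compatibility $\phi([\mathcal{L}])=[\Xi]$, your transport argument is complete --- primitivity is preserved by any group isomorphism, the $(u,d)$-intersection profile is preserved by a pairing-preserving map that matches the polarization classes, and applying Theorem~\ref{numchar} on both sides yields the bijection $X\mapsto\Delta_B^{-1}(\phi(\Delta_A(X)))$, which preserves dimension and exponent; simplicity then becomes the statement that no primitive class realizes the profile for any $1\leq u\leq n-1$.

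However, the step you flagged in your third paragraph is a genuine gap, and it cannot be repaired in the way you propose: the polarization class is \emph{not} determined by $\mbox{NS}$ together with its intersection pairing, so no argument will show that $\phi$ automatically carries $[\mathcal{L}]$ to $[\Xi]$. Two counterexamples. First, if $n$ is even, take $B=A$, $\Xi=\mathcal{L}$ and $\phi=-\mbox{id}$; this preserves the $n$-fold pairing since $(-1)^n=1$, yet $\phi([\mathcal{L}])=-[\mathcal{L}]$, and since every nonzero subvariety class satisfies $(\delta_X\cdot\mathcal{L}^{n-1})=(n-1)!\,ud>0$, the class $-\delta_X$ is never of the form $\delta_{X'}$, so this $\phi$ induces no map on abelian subvarieties at all. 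Second, and worse, for any $n$ take $A=B=E^n$ ($E$ without CM) with the product principal polarization and $\phi=\sigma^*$ for $\sigma\in\mbox{GL}_n(\Z)=\mbox{Aut}(E^n)$ with $\sigma^*\Xi\not\equiv\Xi$; pullback by an automorphism preserves all intersection numbers, but moves $[\Xi]$. Concretely, for $n=3$ and $\sigma$ the elementary matrix with an extra $1$ in position $(1,2)$, identifying $\mbox{NS}(E^3)$ with symmetric integer matrices, $\phi$ sends $\delta_X=\mbox{diag}(1,0,0)$ --- the curve $E\times\{0\}\times\{0\}$, of exponent $1$ --- to $vv^t$ with $v=(1,1,0)^t$, which by the norm-endomorphism criterion is $\delta_{X'}$ for the elliptic curve $X'=\{(x,x,0)\}$ of exponent $2$: here the transported correspondence exists but fails to preserve exponents, and no sign adjustment fixes it. So the corollary must be read with the (surely intended, but unstated) hypothesis that $\phi([\mathcal{L}])=[\Xi]$: the polarization class is extra data beyond the numerical lattice. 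Under that reading your first two paragraphs already constitute the entire proof, and the difficulty you rightly identified is not an obstacle to be overcome but a hypothesis that has to be added.
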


\begin{corollary}
Let $\overline{\Sigma}(A)$ be the graded subring of $\frak{A}^*(A)$ generated by all cycle classes of abelian subvarieties and their numerical divisor classes, and let $\overline{\Sigma}(A)_\Q:=\overline{\Sigma}(A)\otimes_\Z\Q$. Then $\overline{\Sigma}(A)_\Q$ is a finite-dimensional vector space over $\Q$.
\end{corollary}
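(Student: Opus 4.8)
The plan is to use the ``moreover'' clause of Theorem~\ref{numchar} to eliminate the cycle-class generators of $\overline{\Sigma}(A)$ in favour of divisor classes, thereby realizing $\overline{\Sigma}(A)_\Q$ as a $\Q$-subspace of the subalgebra of $\frak{A}^*(A)_\Q:=\frak{A}^*(A)\otimes_\Z\Q$ generated by its codimension-one part, which is finite-dimensional for purely formal reasons.

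First I would record that on an abelian variety the codimension-one part of the Chow ring modulo algebraic equivalence is $\frak{A}^1(A)=\mbox{Pic}(A)/\mbox{Pic}^0(A)=\mbox{NS}(A)$, so each numerical divisor class $\delta_X$ is a degree-one element of $\frak{A}^*(A)$. Then, for an arbitrary abelian subvariety $W\leq A$, say of dimension $w$, I would pass to its complementary abelian subvariety $X$, which has dimension $u=n-w$ and some exponent $d$. Applying the last equality of Theorem~\ref{numchar} to $X$ (whose complement is $W$) gives $\delta_X^{\,u}=\frac{u!(n-u)!\,d^u}{(\mathcal{L}^{n-u}\cdot W)}[W]$, and since the displayed coefficient is a nonzero rational number, $[W]$ is a rational multiple of the power $\delta_X^{\,u}$ of a numerical divisor class. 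Because $X$ is again an abelian subvariety, $\delta_X$ is one of the admissible divisor-class generators; hence every cycle-class generator $[W]$ already lies in the subring generated by the classes $\{\delta_X\}$.

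Consequently $\overline{\Sigma}(A)_\Q$ is contained in $R$, the $\Q$-subalgebra of $\frak{A}^*(A)_\Q$ generated by $\mbox{NS}(A)_\Q:=\mbox{NS}(A)\otimes_\Z\Q$ (here I use that tensoring with $\Q$ over $\Z$ is exact, so the inclusion $\overline{\Sigma}(A)\hookrightarrow\frak{A}^*(A)$ stays injective and the generators map into $R$). It then suffices to see that $R$ is finite-dimensional. Writing $\rho=\dim_\Q\mbox{NS}(A)_\Q$ for the Picard number, the degree-$k$ piece $R_k$ is the image of the multiplication map $\mbox{Sym}^k(\mbox{NS}(A)_\Q)\to\frak{A}^k(A)_\Q$, so $\dim_\Q R_k\leq\binom{\rho+k-1}{k}$; moreover $R_k=0$ once $k>n$, as there are no nonzero cycle classes in codimension exceeding $\dim A=n$. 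Thus $R=\bigoplus_{k=0}^n R_k$ is finite-dimensional, and any $\Q$-subspace of $R$, in particular $\overline{\Sigma}(A)_\Q$, is finite-dimensional as claimed.

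I do not anticipate a genuine obstacle: the whole argument is essentially a bookkeeping consequence of Theorem~\ref{numchar}. The one step that deserves attention is the reduction in the second paragraph, where the a priori higher-codimension generators $[W]$ must be shown to be redundant; this is precisely where the self-intersection formula expressing $[W]$ as a nonzero rational multiple of $\delta_X^{\,u}$ is used, and it is what prevents $\overline{\Sigma}(A)$ from growing beyond the (manifestly finite-dimensional) subalgebra generated by $\mbox{NS}(A)_\Q$.
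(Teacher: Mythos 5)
Your proof is correct and takes essentially the same route as the paper: both arguments use the final formula of Theorem~\ref{numchar} to write the cycle class of every abelian subvariety as a nonzero rational multiple of a power of the numerical divisor class of its complementary subvariety, so that $\overline{\Sigma}(A)_\Q$ is generated by divisor classes, and then conclude from the finite generation of $\mbox{NS}(A)$. The only difference is bookkeeping at the last step: the paper picks finitely many generators $\delta_{X_1},\ldots,\delta_{X_m}$ of the $\Z$-module spanned by the $\delta_X$ and uses that they are nilpotent (hence integral over $\Q$) to get module-finiteness of $\Q[\delta_{X_1},\ldots,\delta_{X_m}]$, whereas you bound each graded piece by a symmetric power of $\mbox{NS}(A)_\Q$ and use that $\frak{A}^k(A)=0$ for $k>n$ --- both are formal and equally valid.
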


\begin{proof}
Since $\mbox{NS}(A)$ is finitely generated, the $\Z$-module generated by the $\delta_X$ is generated by certain classes $\delta_{X_1},\ldots,\delta_{X_m}$ (recall that $\delta_A=[ \mathcal{L}]$). By Theorem~\ref{numchar}, for every abelian subvariety $X$ of dimension $u$ and exponent $d$ we have
$$(d[ \mathcal{L}]-\delta_X)^{n-u}=\frac{(n-u)!u!d^{n-u}}{(\mathcal{L}^u\cdot X)}[X].$$
Since $\delta_X$ and $[ \mathcal{L}]$ are linear combinations of the $\delta_{X_i}$ (remember that $[ \mathcal{L}]=\delta_A$), we obtain that $\overline{\Sigma}(A)_\Q=\Q[\delta_{X_1},\ldots,\delta_{X_m}]$. Now since each $\delta_{X_i}$ is integral over $\Q$ (they are nilpotent classes after all), $\Q[\delta_{X_1},\ldots,\delta_{X_m}]$ is a finitely generated $\Q$-module.
\end{proof}

\begin{corollary}
For each $i$, the $\Q$-vector space $\Sigma^i(A)_\Q$ generated by the cycle classes of codimension $i$ abelian subvarieties is finite dimensional.
\end{corollary}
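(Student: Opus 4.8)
The plan is to exhibit $\Sigma^i(A)_\Q$ as a subspace of a space already known to be finite-dimensional. The quickest route uses the preceding corollary directly: by definition $\overline{\Sigma}(A)$ is generated, as a subring of $\frak{A}^*(A)$, by the cycle classes of \emph{all} abelian subvarieties together with their numerical divisor classes, so in particular every class $[Y]$ with $Y$ a codimension $i$ abelian subvariety is one of these generators and hence lies in $\overline{\Sigma}(A)$. Since such a class is homogeneous of degree $i$, it lies in the degree $i$ graded piece $\overline{\Sigma}^i(A)\subseteq\overline{\Sigma}(A)$. Thus $\Sigma^i(A)_\Q$, being the $\Q$-span of these classes, is a $\Q$-subspace of $\overline{\Sigma}(A)_\Q$. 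As the latter is finite-dimensional over $\Q$ by the preceding corollary, so is $\Sigma^i(A)_\Q$.

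If one prefers a self-contained argument that bypasses the full ring $\overline{\Sigma}(A)_\Q$, the same conclusion follows directly from Theorem~\ref{numchar}. Given a codimension $i$ abelian subvariety $Y$, let $X$ be its complementary abelian subvariety (of dimension $i$) and let $d$ be the exponent of $X$; the number $(\mathcal{L}^{n-i}\cdot Y)$ is positive because $\mathcal{L}$ restricts to an ample class on the $(n-i)$-dimensional variety $Y$. Taking $u=i$ in the last displayed formula of Theorem~\ref{numchar} and solving for $[Y]$ shows that
$$[Y]=\frac{(\mathcal{L}^{n-i}\cdot Y)}{i!(n-i)!d^i}\,\delta_X^i,$$
so that $[Y]$ is a scalar multiple of the $i$-fold intersection power of a class in $\mbox{NS}(A)$. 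Hence $\Sigma^i(A)_\Q$ is contained in the image of the intersection-product map $m_i:\mbox{NS}(A)_\Q^{\otimes i}\to\frak{A}^i(A)_\Q$. Since $\mbox{NS}(A)$ is finitely generated, $\mbox{NS}(A)_\Q$ is finite-dimensional, hence so is its $i$-th tensor power, and therefore so is the image $m_i(\mbox{NS}(A)_\Q^{\otimes i})$; a subspace of this image is again finite-dimensional.

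I do not anticipate a genuine obstacle here: the substantive work has already been carried out in Theorem~\ref{numchar} and the preceding corollary, and what remains is to recognize the correct inclusion and to keep track of the grading. The only points requiring a moment of care are the non-vanishing of the normalizing factor $(\mathcal{L}^{n-i}\cdot Y)$, guaranteed by ampleness of $\mathcal{L}|_Y$, and the observation that the span of the powers $\delta_X^i$, though possibly indexed by infinitely many abelian subvarieties $X$, lands in the fixed finite-dimensional space cut out by $\mbox{NS}(A)_\Q$ — which is precisely what makes the statement nontrivial.
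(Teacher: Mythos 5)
Your first argument is exactly the paper's (implicit) proof: the corollary is stated there without proof because, as you observe, each class $[Y]$ of a codimension $i$ abelian subvariety is a homogeneous degree $i$ generator of $\overline{\Sigma}(A)$, so $\Sigma^i(A)_\Q$ sits inside the degree $i$ graded piece of the finite-dimensional space $\overline{\Sigma}(A)_\Q$. Your second, self-contained argument via the identity $[Y]=\frac{(\mathcal{L}^{n-i}\cdot Y)}{i!(n-i)!d^i}\,\delta_X^i$ from Theorem~\ref{numchar} is also correct (including the justification that $(\mathcal{L}^{n-i}\cdot Y)>0$ by ampleness of $\mathcal{L}|_Y$), and it is in substance the same computation the paper uses to prove the preceding corollary, so both routes are faithful to the source.
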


One question that seems interesting is: If $\Omega$ is the group generated by the classes $\delta_X$, what is its rank in comparison to the Picard number of $A$? If $A$ is simple then the question is uninteresting, since by Proposition 5.5.7 in \cite{BL} the Picard number of a simple complex ppav can vary between 1 and $3n/2$, but $\Omega=\langle[\mathcal{L}]\rangle$. Is there a non-trivial relation between the two ranks for non-simple ppavs?

Using intersection numbers, we can tell when an abelian subvariety of codimension 1 contains an elliptic curve:

\begin{corollary}
Let $E,Z\leq A$ be abelian subvarieties of dimensions $1$ and $n-1$ and exponents $d_E$ and $d_Z$, respectively, and let $\delta_E$ and $\delta_Z$ be their respective numerical classes. Then $E$ is contained in $Z$ if and only if 
$$(\delta_E\cdot\delta_Z\cdot\mathcal{L}^{n-2})=d_Ed_Z(n-2)!(n-2).$$
\end{corollary}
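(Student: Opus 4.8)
The plan is to convert the intersection number $(\delta_E\cdot\delta_Z\cdot\mathcal{L}^{n-2})$ into a trace of the composite norm endomorphism $N_EN_Z$ and then to recognize the asserted value as the equality case of a positivity inequality coming from the Rosati form. The first ingredient I would record is the bilinear \emph{trace formula}
\[
(\alpha\cdot\beta\cdot\mathcal{L}^{n-2})=(n-2)!\left(\tfrac14\mbox{tr}(\phi_\alpha)\mbox{tr}(\phi_\beta)-\tfrac12\mbox{tr}(\phi_\alpha\phi_\beta)\right),\qquad\alpha,\beta\in\mbox{NS}(A).
\]
This comes out of the same device used in the proof of Theorem~\ref{numchar}: for $\gamma\in\mbox{NS}(A)$ and $m\gg0$ the class $m[\mathcal{L}]-\gamma$ is ample, so $\left(\frac{((m[\mathcal{L}]-\gamma)^n)}{n!}\right)^2=\deg(m-\phi_\gamma)=P_{\phi_\gamma}(m)$, an identity of polynomials in $m$. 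Comparing the coefficients of $m^{2n-1}$ recovers $(\gamma\cdot\mathcal{L}^{n-1})=\frac{(n-1)!}{2}\mbox{tr}(\phi_\gamma)$, and comparing the coefficients of $m^{2n-2}$ then yields $(\gamma^2\cdot\mathcal{L}^{n-2})=(n-2)!\big(\tfrac14\mbox{tr}(\phi_\gamma)^2-\tfrac12\mbox{tr}(\phi_\gamma^2)\big)$; polarizing $\gamma=\alpha+\beta$ gives the displayed formula. This is routine coefficient bookkeeping and works over any algebraically closed field.

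Applying the formula with $\alpha=\delta_E$ and $\beta=\delta_Z$, together with the values $\mbox{tr}(N_E)=2d_E$ and $\mbox{tr}(N_Z)=2(n-1)d_Z$ read off from the trace computation $\mbox{tr}(N_X)=2ud$ in the proof of the opening Proposition, the problem collapses to a single scalar identity: the asserted equality $(\delta_E\cdot\delta_Z\cdot\mathcal{L}^{n-2})=d_Ed_Z(n-2)!(n-2)$ holds if and only if $\mbox{tr}(N_EN_Z)=2d_Ed_Z$. The forward implication is then immediate: if $E\subseteq Z$, then since $N_Z$ restricts to multiplication by $d_Z$ on $Z$ and $\mbox{Im}(N_E)=E\subseteq Z$, we have $N_ZN_E=d_ZN_E$, whence $\mbox{tr}(N_EN_Z)=d_Z\mbox{tr}(N_E)=2d_Ed_Z$.

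The converse is the heart of the matter and is where I expect the main difficulty. Set $\pi_E:=\frac1{d_E}N_E$ and $\pi_Z:=\frac1{d_Z}N_Z$ in $\mbox{End}_\Q(A)$; since $N_X^2=d_XN_X$ these are symmetric idempotents with $\mbox{Tr}(\pi_E)=2$. I would apply the positive definite Rosati form $H(f,g)=\mbox{Tr}(fg^\dagger)$ (the same form used in the lemma just above) to $f:=\pi_E(\mbox{id}-\pi_Z)$, for which $f^\dagger=(\mbox{id}-\pi_Z)\pi_E$. Using idempotency and cyclicity of the trace gives
\[
0\le H(f,f)=\mbox{Tr}\big(\pi_E(\mbox{id}-\pi_Z)\pi_E\big)=\mbox{Tr}(\pi_E)-\mbox{Tr}(\pi_E\pi_Z)=2-\tfrac{1}{d_Ed_Z}\mbox{tr}(N_EN_Z),
\]
so $\mbox{tr}(N_EN_Z)\le 2d_Ed_Z$ always, and the asserted numerical equality is exactly the case $H(f,f)=0$. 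Positive definiteness then forces $f=0$, i.e. $N_ZN_E=d_ZN_E$ after clearing denominators; hence $N_Z$ acts as multiplication by $d_Z$ on $E=\mbox{Im}(N_E)$, so $E=d_ZE\subseteq Z$. The delicate point to get right is precisely this last passage from the vanishing of a symmetric endomorphism back to an honest containment of abelian subvarieties, but it is handled cleanly by reading off the action of $N_Z$ on the image of $N_E$; the forward direction and the trace formula are by contrast essentially formal.
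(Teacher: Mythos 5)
Your proof is correct, but it takes a genuinely different route from the paper's. The paper argues in the Chow ring modulo algebraic equivalence: $E\subseteq Z$ if and only if $[E]\cdot[Z]=0$ in $\frak{A}^*(A)$, and since Theorem~\ref{numchar} exhibits $[E]$ and $[Z]$ as positive rational multiples of $\delta_{C(E)}^{n-1}=(d_E[\mathcal{L}]-\delta_E)^{n-1}$ and $\delta_{C(Z)}=d_Z[\mathcal{L}]-\delta_Z$, containment is equivalent to the vanishing of $(d_E[\mathcal{L}]-\delta_E)^{n-1}\cdot(d_Z[\mathcal{L}]-\delta_Z)$, which is then expanded using the theorem's intersection numbers (the expansion also requires $(\delta_E^r\cdot\delta_Z\cdot\mathcal{L}^{n-1-r})=0$ for $r\geq2$, which holds because $\delta_E$ is itself a rational multiple of the cycle class $[C(E)]$ of a codimension-one abelian subvariety, whence $\delta_E^2=0$ in $\frak{A}^*(A)$). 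You instead never leave $\mbox{End}_\Q(A)$: your bilinear trace formula, obtained by polarizing the same Riemann--Roch/characteristic-polynomial identity used in the proof of Theorem~\ref{numchar}, reduces the statement to $\mbox{tr}(N_EN_Z)=2d_Ed_Z$, and the positive definiteness of the Rosati form (the same form the paper invokes in its minimal-polynomial lemma) identifies this as the equality case of $\mbox{Tr}(\pi_E\pi_Z)\leq\mbox{Tr}(\pi_E)=2$. Each step of yours checks out. What your route buys: it proves strictly more, namely the unconditional inequality $(\delta_E\cdot\delta_Z\cdot\mathcal{L}^{n-2})\geq d_Ed_Z(n-2)!(n-2)$ with equality exactly when $E\subseteq Z$, it sidesteps the mixed-term subtlety noted above, and it generalizes verbatim to a containment criterion $\mbox{Tr}(\pi_X\pi_W)=\mbox{Tr}(\pi_X)$ for abelian subvarieties of arbitrary dimensions --- directly relevant to the question the paper raises right after this corollary. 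What the paper's route buys is brevity: with the Chow-ring half of Theorem~\ref{numchar} already in hand, the proof is three formal lines. One cosmetic repair: $f=0$ literally yields $\pi_E\pi_Z=\pi_E$, i.e.\ $N_EN_Z=d_ZN_E$; to get $N_ZN_E=d_ZN_E$, which is the form your last step uses, apply $\dagger$ to both sides (both projectors are symmetric), or run the argument with $f=(\mbox{id}-\pi_Z)\pi_E$ from the start.
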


\begin{proof}
We have that $E\subseteq Z$ in this case if and only if $[E][Z]=0$ in $\frak{A}^*(A)$. By Theorem~\ref{numchar}, this is equivalent to $\delta_{C(E)}^{n-1}\delta_{C(Z)}=0$, where $C(E)$ and $C(Z)$ denote the complementary abelian subvarieties of $E$ and $Z$, respectively. Since $\delta_{C(E)}=d_E[\mathcal{L}]-\delta_E$ (and similarly for $Z$), this is equivalent to
$$(d_E[\mathcal{L}]-\delta_E)^{n-1}(d_Z[\mathcal{L}]-\delta_Z)=0.$$
After expanding this expression we obtain the desired result.
\end{proof}

It would be nice to have a numerical criterion for when an abelian subvariety is non-simple in general. 

For $n=2$, Kani showed in \cite{Kani} that elliptic subgroups on principally polarized abelian surfaces can be described by the quadratic form $q(\alpha)=(\alpha\cdot\mathcal{L})^2-2(\alpha^2)$ on $\mbox{NS}(A)/\Z[\mathcal{L}]$. The author showed in \cite{Auff} that for $n\geq3$ the analogous form $q(\alpha)=(\alpha\cdot\mathcal{L}^{n-1})^2-n!(\alpha^2\cdot\mathcal{L}^{n-2})$ is not enough to describe elliptic subgroups, and homogeneous forms $q_2,\ldots,q_n$ were introduced on $\mbox{NS}(A)/\Z[\mathcal{L}]$ in order to classify elliptic subgroups in higher dimensions. Recall that 
$$q_r(\alpha):=-\frac{1}{(r-1)n!}((\alpha^\natural)^r\cdot\mathcal{L}^{n-r}),$$
where $\alpha^\natural=n!\alpha-(\alpha\cdot\mathcal{L}^{n-1})[\mathcal{L}]$ for $\alpha\in\mbox{NS}(A)$.  With a little work, it can be shown that the same homogeneous forms characterize, via $\Delta$, not only elliptic subgroups on $A$, but all abelian subvarieties on $A$. The generalization is as follows:

\begin{theorem}
Let $\pi:\mbox{NS}(A)\to\mbox{NS}(A)/\Z[\mathcal{L}]$ be the natural projection. Then the map $X\mapsto\pi(\frac{1}{d}[N_X^*\mathcal{L}])$ induces a bijection between
\begin{enumerate}
\item abelian subvarieties of dimension $u$ and exponent $d$ on $A$
\item primitive classes $[\alpha]\in\mbox{NS}(A)/\Z[\mathcal{L}]$ that satisfy $(\alpha\cdot\mathcal{L}^{n-1})\equiv (n-1)!ud\mbox{ (mod }n!)$ and $q_r(\alpha)=f(u,r)d^r$ for $2\leq r\leq n$, where
\end{enumerate}
\begin{eqnarray}\nonumber f(u,r)&:=&\frac{1}{r-1}\sum_{m=0}^{\min\{r,u\}}\binom{r}{m}\binom{u}{m}n!^{m-1}(n-1)!^{r-m}(n-m)!m!(-1)^{r-m+1}u^{r-m}\end{eqnarray}
\end{theorem}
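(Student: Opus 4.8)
The plan is to deduce this theorem from Theorem~\ref{numchar} by translating the intersection-number conditions on $\delta_X=\frac{1}{d}[N_X^*\mathcal{L}]$ into conditions on its image in $\mathrm{NS}(A)/\mathbb{Z}[\mathcal{L}]$, via the substitution $\alpha\mapsto\alpha^\natural$. First I would record that, since $\mathcal{L}$ is a principal polarization, $(\mathcal{L}^n)=n!$; this makes both $\alpha^\natural=n!\alpha-(\alpha\cdot\mathcal{L}^{n-1})[\mathcal{L}]$ and each $q_r$ invariant under $\alpha\mapsto\alpha+k[\mathcal{L}]$, so they genuinely descend to $\mathrm{NS}(A)/\mathbb{Z}[\mathcal{L}]$, and it is also what turns the exact equality $(\delta_X\cdot\mathcal{L}^{n-1})=(n-1)!ud$ of Theorem~\ref{numchar} into the stated congruence modulo $n!$.

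For the forward direction, the equality $(\delta_X\cdot\mathcal{L}^{n-1})=(n-1)!ud$ gives $\delta_X^\natural=n!\delta_X-(n-1)!ud[\mathcal{L}]$, whence
$$((\delta_X^\natural)^r\cdot\mathcal{L}^{n-r})=\sum_{m=0}^r\binom{r}{m}(n!)^m(-(n-1)!ud)^{r-m}(\delta_X^m\cdot\mathcal{L}^{n-m}).$$
Substituting $(\delta_X^m\cdot\mathcal{L}^{n-m})=(n-m)!m!\binom{u}{m}d^m$ (valid for all $0\le m\le n$, since the conventions $(\mathcal{L}^n)=n!$ and $\binom{u}{m}=0$ absorb the cases $m=0$ and $m>u$), factoring out $d^r$, and dividing by $-(r-1)n!$, the sum collapses to exactly $f(u,r)d^r$; the congruence holds trivially. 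Thus $X\mapsto\pi(\delta_X)$ lands in the stated set, modulo checking primitivity.

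The converse is triangular back-substitution. Given a primitive $[\alpha]$ satisfying the congruence and the $q_r$-conditions for $(u,d)$, the congruence lets me choose the unique representative $\alpha^*$ with $(\alpha^*\cdot\mathcal{L}^{n-1})=(n-1)!ud$. Writing $c_m:=(\alpha^{*m}\cdot\mathcal{L}^{n-m})$, the displayed expansion with $\alpha^*$ in place of $\delta_X$ exhibits $q_r(\alpha^*)$ as a fixed nonzero rational multiple of $c_r$ plus a $\mathbb{Q}$-combination of $c_0,\dots,c_{r-1}$. Since $c_0=n!$ and $c_1=(n-1)!ud$ are fixed and each $q_r(\alpha^*)=f(u,r)d^r$ is prescribed, induction on $r$ forces every $c_r$ to equal $(n-r)!r!\binom{u}{r}d^r$ (the forward computation shows these targets solve the same recursion). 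Hence $\alpha^*$ satisfies all hypotheses of Theorem~\ref{numchar}. Injectivity within a fixed $(u,d)$ is immediate: two classes of $\mathcal{L}^{n-1}$-degree $(n-1)!ud$ with equal image differ by $k[\mathcal{L}]$ with $kn!=0$, so $k=0$.

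The genuinely delicate point — and the step I expect to be the main obstacle — is reconciling the two notions of primitivity, since a quotient map need not preserve primitivity. Because $[\mathcal{L}]=\delta_A$ is primitive, $\mathrm{NS}(A)/\mathbb{Z}[\mathcal{L}]$ is torsion-free, which gives the easy direction: a primitive $[\alpha]$ forces the normalized lift $\alpha^*$ to be primitive in $\mathrm{NS}(A)$, since otherwise $\alpha^*=\ell\gamma$ would descend to a nontrivial divisibility of $[\alpha]$. For the other direction, if $\pi(\delta_X)$ were divisible by a prime $\ell$ then $\delta_X=\ell\beta+k[\mathcal{L}]$, so $N_X-k\,\mathrm{id}=\ell\phi_\beta$; comparing characteristic polynomials, $P_{N_X}(t)=t^{2(n-u)}(t-d)^{2u}$ would have all of its roots congruent to $k$ modulo $\ell$ as algebraic integers, which for $0<u<n$ forces $\ell\mid d$ and $\ell\mid k$, hence $\ell\mid\delta_X$ — contradicting primitivity. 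This last argument is characteristic-free, using only integrality of $P_{N_X}$ and its roots. With primitivity settled both ways, composing the resulting bijection of classes with Theorem~\ref{numchar} yields the theorem.
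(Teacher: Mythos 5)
Your proposal is correct, but a comparison with ``the paper's own proof'' is not possible in the usual sense: the paper explicitly omits the proof of this theorem, saying only that it ``is not very different from the proof of the special case proved in \cite{Auff}.'' Your argument is the natural reduction to Theorem~\ref{numchar}, and I can confirm its three main components. (1) The normalization step is right: since $(\mathcal{L}^n)=n!$ for a principal polarization, $\alpha^\natural$ and hence each $q_r$ are invariant under $\alpha\mapsto\alpha+k[\mathcal{L}]$, and the binomial expansion of $((\delta_X^\natural)^r\cdot\mathcal{L}^{n-r})$ using $(\delta_X^m\cdot\mathcal{L}^{n-m})=(n-m)!\,m!\binom{u}{m}d^m$ (valid for all $0\le m\le n$) reproduces $f(u,r)d^r$ exactly, signs and factorials included. (2) The triangular back-substitution is valid because the coefficient of $c_r$ in that expansion is $(n!)^r\neq 0$, so $c_0=n!$, $c_1=(n-1)!ud$ and the prescribed values $q_2,\dots,q_n$ determine $c_2,\dots,c_n$ uniquely, and the target sequence $(n-r)!\,r!\binom{u}{r}d^r$ solves the same system by the forward computation. (3) The step you single out as delicate is indeed where a careless proof would fail, since quotient maps do not preserve primitivity in either direction; your two arguments are both sound. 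The easy direction only needs that $[\mathcal{L}]$ is primitive (immediate, since $\phi_{\mathcal{L}}$ is an isomorphism), and the hard direction --- if $\delta_X=\ell\beta+k[\mathcal{L}]$ then $N_X-k\,\mathrm{id}=\ell\phi_\beta$, so $-k/\ell$ and $(d-k)/\ell$ are rational roots of the monic integral polynomial $P_{\phi_\beta}$, hence integers, giving $\ell\mid k$, $\ell\mid d$ and contradicting primitivity of $\delta_X$ --- is correct and characteristic-free, exactly as you claim. Note that this step uses $0<u<n$ so that both $0$ and $d$ occur as roots of $P_{N_X}$; this restriction is harmless (and in fact necessary, since $\pi(\delta_{\{0\}})=\pi(\delta_A)=0$ is not primitive), but it is worth stating explicitly that the theorem, like its predecessors, concerns non-trivial abelian subvarieties.
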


The condition $(\alpha\cdot\mathcal{L}^{n-1})\equiv (n-1)!ud\mbox{ (mod }n!)$ is conjectured to be superfluous, as in the case $u=1$. We omit the proof of this theorem; it is not very different from the proof of the special case proved in \cite{Auff}.

\section{Analytic theory}

When $k=\mathbb{C}$, we have that $A\simeq\C^n/\Lambda$ for some lattice $\Lambda$. By choosing real coordinates $x_1,\ldots,x_{2n}$ on $A$ that come from a symplectic basis $\mathcal{B}:=\{\lambda_1,\ldots,\lambda_n,\mu_1,\ldots,\mu_n\}$ for $\Lambda$, $\mbox{NS}(A)$ can be canonically identified with a subgroup of $\bigwedge^2\Z^{2n}\simeq H^2(A,\Z)$ such that the class of $ \mathcal{L}$ is $\theta:=-\sum_{i=1}^ndx_i\wedge dx_{i+n}$; this is done via the first Chern class $c_1:\mbox{Pic}(A)\to H^2(A,\Z)$. It is interesting to observe that $\bigwedge^2\Z^{2n}$ no longer depends on $A$, and this gives us a good environment to study moduli of non-simple ppavs. Let $\tau$ be the period matrix for $(A, \mathcal{L})$ defined by $\mathcal{B}$ and the complex basis $\mu_1,\ldots,\mu_n$ of $\C^n$. The respective real and complex coordinates are related by the following formula:
$$\begin{pmatrix}z_1\\\vdots\\z_n\end{pmatrix}=(\tau\hspace{0.1cm}I)\begin{pmatrix}x_1\\\vdots\\x_{2n}\end{pmatrix}.$$
Given that $\mbox{NS}(A)=H^2(A,\Z)\cap H^{1,1}(A,\C)$, we obtain
$$\mbox{NS}(A)=\{\omega\in H^2(A,\Z):\omega\wedge dz_1\wedge\cdots\wedge dz_n=0\}.$$
In this context, Theorem~\ref{numchar} essentially says:

\begin{theorem}\label{main} The map $X\mapsto c_1(\frac{1}{d}N_X^* \mathcal{L})$ induces a bijection between
\begin{enumerate}
\item abelian subvarieties of dimension $u$ and exponent $d$
\item primitive differential forms $\eta\in H^2(A,\Z)$ that satisfy 
\begin{enumerate}
\item $\eta\wedge dz_1\wedge\cdots\wedge dz_n=0$ and
\item$\eta^{\wedge r}\wedge \theta^{\wedge(n-r)}=\left\{\begin{array}{ll}(n-r)!r!\binom{u}{r}d^r\omega_0&1\leq r\leq u\\0&u+1\leq r\leq n\end{array}\right.$
\end{enumerate}
where $\omega_0=(-1)^ndx_1\wedge dx_{n+1}\cdots\wedge dx_{n}\wedge dx_{2n}$.
\end{enumerate}
\end{theorem}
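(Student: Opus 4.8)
The plan is to treat Theorem~\ref{main} as the complex-analytic transcription of Theorem~\ref{numchar} under the first Chern class, so that almost all of the content is already contained in the earlier result. Concretely, the isomorphism $\mathrm{NS}(A)\simeq\mathrm{Pic}(A)/\mathrm{Pic}^0(A)$ composed with $c_1$ realizes $\mathrm{NS}(A)$ as the subgroup $H^2(A,\Z)\cap H^{1,1}(A,\C)$ of $H^2(A,\Z)$ (Lefschetz $(1,1)$), and under this identification the numerical class $\delta_X=\frac1d[N_X^*\mathcal L]$ of Theorem~\ref{numchar} corresponds to the integral form $\eta=c_1(\frac1d N_X^*\mathcal L)$; this last point is immediate from the functoriality of $c_1$. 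Thus the map in Theorem~\ref{main} is literally $c_1\circ\Delta$, and it suffices to match conditions (a) and (b) with the hypotheses of Theorem~\ref{numchar}(2).

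First I would dispose of condition (a) and of primitivity. Using the description $\mathrm{NS}(A)=\{\omega\in H^2(A,\Z):\omega\wedge dz_1\wedge\cdots\wedge dz_n=0\}$ recalled just before the statement, condition (a) says precisely that the integral class $\eta$ lies in $\mathrm{NS}(A)$, i.e. that $\eta$ is a genuine N\'eron--Severi class rather than an arbitrary element of $H^2(A,\Z)$. Since $c_1$ restricts to an isomorphism of $\Z$-modules $\mathrm{NS}(A)\xrightarrow{\ \sim\ }H^2(A,\Z)\cap H^{1,1}(A,\C)$, a class $\alpha$ is primitive in $\mathrm{NS}(A)$ if and only if $\eta=c_1(\alpha)$ is primitive in $H^2(A,\Z)$, so the two primitivity conditions agree.

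The core step is to translate intersection numbers into wedge products. Over $\C$ the cycle class map sends the numerical product of divisor classes to the cup product of their first Chern classes, and the top intersection number is recovered by integration: for classes $\alpha_1,\dots,\alpha_n\in\mathrm{NS}(A)$ one has $(\alpha_1\cdots\alpha_n)=\int_A c_1(\alpha_1)\wedge\cdots\wedge c_1(\alpha_n)$. Since $H^{2n}(A,\Z)\simeq\Z$ is generated by the point class $\omega_0$ with $\int_A\omega_0=1$, any top-degree wedge product equals $(\alpha_1\cdots\alpha_n)\,\omega_0$. Applying this with $\alpha$ repeated $r$ times and $\mathcal L$ repeated $n-r$ times, condition (b) reads off exactly the numbers $(\alpha^r\cdot\mathcal L^{n-r})$ prescribed in Theorem~\ref{numchar}(2); as a consistency check, $\theta^{\wedge n}=n!\,\omega_0$ reproduces $(\mathcal L^n)=n!$ for the principal polarization. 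With the dictionary (a) $\Leftrightarrow$ ``$\eta\in\mathrm{NS}(A)$'' and (b) $\Leftrightarrow$ the intersection conditions in place, Theorem~\ref{numchar} supplies the bijection directly.

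The main obstacle is purely bookkeeping: verifying that the normalizing form $\omega_0=(-1)^n dx_1\wedge dx_{n+1}\wedge\cdots\wedge dx_n\wedge dx_{2n}$ really integrates to $+1$ against the complex orientation, and that $\theta^{\wedge n}=n!\,\omega_0$ with the correct sign. The factor $(-1)^n$ is precisely what is needed to reconcile the lexicographic ordering $dx_1\wedge\cdots\wedge dx_{2n}$ (which integrates to $1$ on $\R^{2n}/\Z^{2n}$) with the orientation induced by $\tfrac{i}{2}\,dz_1\wedge d\bar z_1\wedge\cdots$; one checks this directly from $z=(\tau\ I)x$, the case $n=1$ already exhibiting the sign. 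Once the orientation is pinned down, the remaining verifications are a routine transcription of the earlier proof.
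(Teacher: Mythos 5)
Your proposal is correct and takes the same route as the paper: the paper offers no separate proof of Theorem~\ref{main}, presenting it ("In this context, Theorem~\ref{numchar} essentially says:") as the direct transcription of Theorem~\ref{numchar} under the identification $\mbox{NS}(A)=\{\omega\in H^2(A,\Z):\omega\wedge dz_1\wedge\cdots\wedge dz_n=0\}$ furnished by $c_1$. Your dictionary---condition (a) as membership in $\mbox{NS}(A)$, preservation of primitivity, and condition (b) as the intersection numbers read off against the fundamental class $\omega_0$ normalized by $\theta^{\wedge n}=n!\,\omega_0$---is exactly the routine verification the paper leaves implicit.
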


Observe that if we canonically identify $H^2(A,\Z)$ with $\bigwedge^2\Z^{2n}$ by using the coordinates $x_1,\ldots,x_{2n}$, condition $(b)$ in the previous theorem only depends on the differential form $\eta$ and not on the abelian variety (since we have fixed $c_1( \mathcal{L})=\theta$). For $\eta\in\bigwedge^2\Z^{2n}$, define the sets
$$\mathbb{H}_n(\eta):=\{\tau\in\mathbb{H}_n:\eta\wedge dz_1\wedge\cdots\wedge dz_n=0\}$$
$$\mathcal{A}_n(\eta):=\pi_n(\mathbb{H}_n(\eta)),$$
where $\pi_n:\mathbb{H}_n\to\mathcal{A}_n$ is the natural projection, and define
$$B(n,u,d):=\{\eta\in \bigwedge^2\Z^{2n}:\eta\mbox{ satisfies condition }(b)\mbox{ and }\mathbb{H}_n(\eta)\neq\varnothing\}.$$
One thing we can do here is fix a period matrix of a certain ppav and go over elements of $B(n,u,d)$ (with a computer, for example), checking to see if they satisfy condition $(a)$. This amounts to looking for abelian subvarieties on a fixed ppav, and this method seems to be useful for calculating examples. A second point of view is to fix an element $\eta\in B(n,u,d)$ and to search for all period matrices that satisfiy condition $(a)$. This is equivalent to looking for all ppavs with an abelian subvariety that induces the class $\eta$. In the rest of the paper, we will analyze the second point of view. We note that this second point of view gives us equations on $\mathbb{H}_n$ that describe when the period matrix of a ppav contains an abelian subvariety of fixed dimension and exponent.

If $\eta=\sum_{i<j} a_{ij}dx_i\wedge dx_j\in B(n,u,d)$, define the $2n\times 2n$ matrix
$$M_\eta:=(a_{ij})_{i,j},$$
where $a_{ji}:=-a_{ij}$. Define $J$ to be the $2n\times 2n$ matrix
$$J=\left(\begin{array}{cc}0&I\\-I&0\end{array}\right).$$

\begin{proposition}\label{exa}
Let $X\leq A$ be a $u$-dimensional abelian subvariety of exponent $d$, and let $\eta\in B(n,u,d)$ be its numerical class. Then the rational representation of $N_X$ (with respect to the symplectic basis above) is given by the matrix $JM_\eta$. In particular, $u=\frac{1}{2}\mbox{rank}(M_\eta)$ and the vector space in $\C^n$ that defines $X$ can be identified, using $\{\lambda_1,\ldots,\mu_n\}$, with the image of $JM_\eta$. Moreover, $d=-\frac{1}{u}(a_{1,n+1}+\cdots+a_{n,2n})=\frac{1}{2u}\mbox{tr}(N_X)$.
\end{proposition}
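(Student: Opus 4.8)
The plan is to reduce everything to the standard dictionary between the rational representation of an endomorphism and the Gram matrix of an alternating form. Write $V=\C^n$, $A=V/\Lambda$, and let $\rho_r\colon\mbox{End}(A)\to\mbox{End}_\Q(\Lambda\otimes\Q)$ be the rational representation in the basis $\mathcal{B}$. The key input is the classical fact (Birkenhake--Lange, \S2.4) that for any $M\in\mbox{Pic}(A)$ the homomorphism $\phi_M\colon A\to A^\vee$ acts on $H_1$ by $\lambda\mapsto c_1(M)(\lambda,-)$; in the basis $\mathcal{B}$ and its dual this is exactly the Gram matrix of $c_1(M)$. Applied to $\mathcal{L}$, whose class is $\theta=-\sum_{i=1}^n dx_i\wedge dx_{i+n}$, the Gram matrix is computed directly to be $-J$, so the identification $A^\vee\simeq A$ furnished by $\phi_{\mathcal{L}}^{-1}$ has rational representation $(-J)^{-1}=J$ (using $J^2=-I$). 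Since $\eta=c_1(\delta_X)$ has Gram matrix $M_\eta$ by construction of the coordinates, and $N_X=\phi_{\delta_X}$ is, as an endomorphism of $A$, the composite $\phi_{\mathcal{L}}^{-1}\circ\phi_{\delta_X}$, its rational representation is the product $J\cdot M_\eta=JM_\eta$. This establishes the first assertion.

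For the rank and the exponent, note first that $J$ is invertible, so $\mbox{rank}(M_\eta)=\mbox{rank}(JM_\eta)=\mbox{rank}\,\rho_r(N_X)$. Using $N_X+N_Y=d\,\mbox{id}$ together with the isogeny $X\times Y\to A$, over $\Q$ one has $\Lambda\otimes\Q\simeq H_1(X,\Q)\oplus H_1(Y,\Q)$, on which $N_X$ acts as $d$ on the first summand and $0$ on the second; hence its image is $H_1(X,\Q)$, of dimension $2u$, and $u=\tfrac12\mbox{rank}(M_\eta)$. For the trace, write $M_\eta$ in $n\times n$ blocks as $\left(\begin{smallmatrix}A&B\\-B^{T}&D\end{smallmatrix}\right)$ with $A,D$ antisymmetric (forced by $M_\eta^{T}=-M_\eta$). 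A direct block computation gives $JM_\eta=\left(\begin{smallmatrix}-B^{T}&D\\-A&-B\end{smallmatrix}\right)$, so $\mbox{tr}(JM_\eta)=-\mbox{tr}(B^{T})-\mbox{tr}(B)=-2\,\mbox{tr}(B)=-2(a_{1,n+1}+\cdots+a_{n,2n})$. Comparing with the earlier computation that $\mbox{tr}(N_X)=2ud$ yields $d=-\tfrac1u(a_{1,n+1}+\cdots+a_{n,2n})=\tfrac{1}{2u}\mbox{tr}(N_X)$.

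It remains to identify the subspace. The tangent space $W:=T_0X\subseteq V$ is the complex subspace with $X=W/(W\cap\Lambda)$, and $W\cap\Lambda$ is a lattice of rank $2u$ whose real span is $W$. Since $\rho_r(N_X)$ has image $H_1(X,\Q)=(W\cap\Lambda)\otimes\Q$, extending scalars to $\R$ and transporting along the real isomorphism $x\mapsto z=(\tau\ I)x$ (which identifies $\Lambda\otimes\R$ with $V$ in the chosen coordinates) carries the image of $JM_\eta$ onto $W$. This is precisely the asserted identification of the vector space defining $X$ with $\mbox{Im}(JM_\eta)$.

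The main obstacle is the bookkeeping of the last step, namely making precise the passage from the rational image $\mbox{Im}(JM_\eta)\subseteq\Q^{2n}$ to the complex subspace $W\subseteq\C^n$ through the period relation. This requires matching the analytic and rational representations of $N_X$ and, in particular, knowing that $\mbox{Im}\,\rho_r(N_X)$ equals $H_1(X,\R)$ exactly (rather than merely being commensurable with it); the scalar $d$ appearing on the $X$-part is harmless over $\Q$ and $\R$, but one must check that no proper subspace or finite-index sublattice is lost. The sign convention in the Gram matrix of $\theta$ must also be tracked carefully, since it is exactly what produces $+J$ rather than $-J$ in the final formula.
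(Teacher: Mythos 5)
Your proof is correct, and it reaches the formula $\rho_r(N_X)=JM_\eta$ by a route that differs in its key identity from the paper's. The paper starts from Lemma~\ref{importantlemma} in the form $M_\eta=\tfrac{1}{d}N_X^*M_\theta$, writes the pullback as the congruence $\tfrac{1}{d}[\rho_r(N_X)]^tM_\theta[\rho_r(N_X)]$, and then uses Rosati-symmetry ($[\rho_r(N_X)]^tM_\theta=M_\theta[\rho_r(N_X)]$) together with the quadratic relation $N_X^2=dN_X$ to cancel the factor $\tfrac{1}{d}$, ending with $M_\eta=M_\theta[\rho_r(N_X)]$ and $M_\theta=-J=J^{-1}$. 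You instead invoke the Key Observation $N_X=\phi_{\mathcal L}^{-1}\phi_{\delta_X}$ and the classical dictionary $\rho_r(\phi_M)=$ Gram matrix of $c_1(M)$, so the answer is just the product of the two matrices; a pleasant feature of this factorization, which you worry about but need not, is that any universal sign or transpose convention in the dictionary appears in both factors and cancels, since $(cM_\theta)^{-1}(cM_\eta)=JM_\eta$ either way. Your treatment of the exponent also differs: the paper extracts $d$ from condition $(b)$ of Theorem~\ref{main} with $r=1$ (a wedge-product computation), while you compute $\mbox{tr}(JM_\eta)=-2(a_{1,n+1}+\cdots+a_{n,2n})$ by a block calculation and compare with $\mbox{tr}(N_X)=2ud$ from the proposition on characteristic polynomials; these are equivalent pieces of bookkeeping. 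Finally, you supply arguments for the rank and image assertions (via $\Lambda\otimes\Q\simeq H_1(X,\Q)\oplus H_1(Y,\Q)$, on which $N_X$ acts as $d\oplus 0$, and the compatibility of rational and analytic representations) that the paper leaves implicit; your observation that $N_X$ acts invertibly on $H_1(X,\R)$ settles exactly the point you flag as the "main obstacle," so the identification $\mbox{Im}(JM_\eta)\otimes\R\cong T_0X$ holds with no loss.
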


\begin{proof}
Let $[\rho_a(N_X)]$ be the matrix of the rational representation of $N_X$ with respect to the symplectic basis. Since $N_X$ is self-adjoint with respect to the positive definite Hermitian form defined by $ \mathcal{L}$, we have that
$$M_\eta=\frac{1}{d}N_X^*M_\theta=\frac{1}{d}[\rho_a(N_X)]^tM_\theta[\rho_a(N_X)]=\frac{1}{d}M_\theta[\rho_a(N_X)]^2=M_\theta[\rho_a(N_X)].$$
Now, $M_\theta=-J=J^{-1}$, and so $[\rho_a(N_X)]=JM_\eta$. The exponent can be calculated using condition $(b)$ of Theorem~\ref{main} with $r=1$.
\end{proof}

Let $S*\tau$ denote the usual action of an element $S\in\mbox{Sp}(2n,\Z)$ on $\tau\in\mathbb{H}_n$; that is,
$$\left(\begin{array}{cc}\alpha&\beta\\\gamma&\delta\end{array}\right)*\tau:=(\alpha\tau+\beta)(\gamma\tau+\delta)^{-1}.$$
The matrix $S^t$ is the rational representation of an isomorphism $A_{S*\tau}\to A_\tau$, and so if $X_\tau\leq A_\tau$ is an abelian subvariety, we define an abelian subvariety $X_{S*\tau}\leq A_{S*\tau}$ by the commutative diagram

\centerline{\xymatrix{A_{S*\tau}\ar[r]^{S^t}\ar[d]_{N_{X_{S*\tau}}}&A_\tau\ar[d]^{N_{X_\tau}}\\A_{S*\tau}&A_\tau\ar[l]_{S^{-t}}}}

\vspace{0.5cm}

This then gives us an action of $\mbox{Sp}(2n,\Z)$ on $B(n,u,d)$ where $\eta$ is sent to the form $S*\eta$ whose matrix is $M_{S*\eta}=SM_\eta S^t$. We wish to describe the orbits of this action.

Remember that the \emph{type} of an abelian subvariety $X\leq A$ is the list $(d_1,\ldots,d_u)$ such that $\ker\phi_{ \mathcal{L}|_X}\simeq\bigoplus_{i=1}^u(\Z/d_i\Z)^2$.

\begin{lemma}\label{type}
If $X\leq A=\C^n/\Lambda$ is an abelian subvariety, then its type is determined by its associated differential form.
\end{lemma}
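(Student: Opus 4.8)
The plan is to show that both pieces of data defining the type of $X$ — the lattice $\Lambda_X$ underlying $X$ and the restriction to it of the principal polarization form $\theta = c_1(\mathcal{L})$ — can be read off directly from the integer matrix $M_\eta$. Writing $A = \C^n/\Lambda$ with $\Lambda = \Z^{2n}$ in the fixed symplectic basis, the subvariety $X$ has lattice $\Lambda_X = \Lambda \cap V_X$, where $V_X \subseteq \C^n$ is the complex subspace defining $X$; and the type $(d_1,\ldots,d_u)$ of $X$ is by definition the list of elementary divisors $d_1 \mid \cdots \mid d_u$ of the integral alternating form $\theta|_{\Lambda_X}$ on $\Lambda_X$. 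So it suffices to recover the pair $(\Lambda_X,\theta|_{\Lambda_X})$ from $M_\eta$ by an explicit procedure.

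First I would extract the numerical invariants from $M_\eta$ using Proposition~\ref{exa}: we have $u = \tfrac12\,\mbox{rank}(M_\eta)$ and $d = -\tfrac1u(a_{1,n+1}+\cdots+a_{n,2n})$, so both depend on $\eta$ alone. Next I would recover $\Lambda_X$. By Proposition~\ref{exa} the rational representation of the norm endomorphism $N_X$ in the symplectic basis is the integer matrix $R := J M_\eta$, and $V_X$ is identified, via the chosen real basis, with the image of $R$. Since $N_X^2 = d\,N_X$, the matrix $R$ satisfies $R^2 = dR$, hence $R/d$ is idempotent and $\mbox{Im}(R) = \ker(R - d\,I)$ over $\R$; intersecting with the lattice gives
$$\Lambda_X = \Lambda \cap V_X = \mbox{Im}(J M_\eta) \cap \Z^{2n} = \ker(J M_\eta - d\,I) \cap \Z^{2n},$$
which is manifestly determined by $M_\eta$ (together with $d$, itself a function of $M_\eta$).

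Finally, the restricted form is computed from the fixed matrix $M_\theta = -J$ of $\theta$: for $v,w \in \Lambda_X$ one has $\theta(v,w) = -\,v^t J w$. Thus the integral alternating form whose elementary divisors constitute the type is obtained purely from $\Lambda_X$ and the universal matrix $J$, so the type $(d_1,\ldots,d_u)$ is a function of $\eta$, as claimed. The only point requiring genuine care — and the main (if mild) obstacle — is the identification of $V_X$ with the image of $R$, i.e. that $\Lambda_X = \Lambda \cap V_X$ is exactly the saturated lattice $\ker(JM_\eta - d\,I)\cap\Z^{2n}$; this rests on Proposition~\ref{exa} together with the standard facts that $N_X$ restricts to $d\cdot\mbox{id}$ on $X$ and vanishes on the complementary subvariety $Y$. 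Once this is in place the remaining steps are routine linear algebra over $\Z$, and I expect no further difficulty.
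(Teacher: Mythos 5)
Your proof is correct and follows essentially the same route as the paper's: both arguments invoke Proposition~\ref{exa} to recover $V_X$ and its lattice $\Lambda_X=V_X\cap\Z^{2n}$ from the matrix $M_\eta$, and then observe that the polarization data restricted to $\Lambda_X$ comes from the universal matrix $J$ alone, with no reference to the period matrix. The only cosmetic difference is that the paper extracts the type from the finite group $\ker\phi_{\mathcal{L}|_X}\simeq\{v\in V_X:v^tJw\in\Z,\ \forall w\in\Lambda_X\}/\Lambda_X$, whereas you use the standard equivalent characterization via the elementary divisors of the integral alternating form $-v^tJw$ on $\Lambda_X$.
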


\begin{proof}
Let $\rho_a(N_X)$ be the rational representation of $N_X$ with respect to the symplectic basis given earlier, and let $V_X$ be the $\R$-vector space generated by its columns. By the previous proposition, $V_X$ can be identified with the vector space that induces $X$. Similarly, set $\Lambda_X:=V_X\cap\Z^{2n}$; this can be seen as the lattice of $X$. We see that the type of $X$ is completely determined by the finite group $\ker\phi_{ \mathcal{L}|_X}$. After writing out the definitions, we see that
$$\ker\phi_{ \mathcal{L}|_X}\simeq\{u\in V_X:u^tJv\in\Z,\forall v\in\Lambda_X\}/\Lambda_X.$$
This group does not involve the period matrix of $A$.
\end{proof}

This lemma means that we can define the \emph{type} of an element in $B(n,u,d)$ in the obvious way. Proposition~\ref{exa} and Lemma~\ref{type} give us a concrete method of working with non-simple ppavs.

\begin{example}

Consider the differential form $\eta_0=dx_3\wedge dx_8-dx_3\wedge dx_7 + dx_2\wedge dx_5 -dx_2\wedge dx_6 + dx_1\wedge dx_6 + dx_4\wedge dx_7 -dx_1\wedge dx_5 -dx_4\wedge dx_8\in\bigwedge^2\Z^8$. This satisfies condition $(ii)$ of Theorem~\ref{main} with $n=4$, $u=2$ and $d=2$. We have

$$N_{\eta_0}:=JM_\eta=\left(\begin{array}{cccc}A&0&0&0\\0&A&0&0\\0&0&A&0\\0&0&0&A\end{array}\right)$$

where

$$A=\left(\begin{array}{cc}1&-1\\-1&1\end{array}\right).$$

Moreover, a matrix $\tau\in\frak{H}_4$ satisfies $\eta_0\wedge dz_1\wedge dz_2\wedge dz_3\wedge dz_4=0$ if and only if it is of the form

$$\tau=\left(\begin{array}{cccc}\tau_1&\tau_2&\tau_3&\tau_4\\\tau_2&\tau_1&\tau_4&\tau_3\\\tau_3&\tau_4&\tau_5&\tau_6\\\tau_4&\tau_3&\tau_6&\tau_5\end{array}\right)$$

Let $(A_\tau,\Theta_\tau)$ be the ppav corresponding to the matrix $\tau$ above, and let $X_\tau$ be the abelian surface that induces the class $\eta_0$. Then using Proposition~\ref{exa}, the vector space that defines $X_\tau$ is generated over $\C$ by the vectors

$$\begin{pmatrix}1\\-1\\0\\0\end{pmatrix},\begin{pmatrix}0\\0\\1\\-1\end{pmatrix}$$

and its lattice is spanned over $\Z$ by the vectors

$$\begin{pmatrix}\tau_1-\tau_2\\\tau_2-\tau_1\\\tau_3-\tau_4\\\tau_4-\tau_3\end{pmatrix},\begin{pmatrix}\tau_3-\tau_4\\\tau_4-\tau_3\\\tau_5-\tau_6\\\tau_6-\tau_5\end{pmatrix},\begin{pmatrix}1\\-1\\0\\0\end{pmatrix},\begin{pmatrix}0\\0\\1\\-1\end{pmatrix}.$$

The complementary abelian variety of $X_\tau$ in $A_\tau$ can be found using the same method. By taking the alternating form $-M_{\eta_0}$, we can see that $X_\tau$ is an abelian subvariety of type $(2,2)$. \qed\\

\end{example}

Let $D=(d_1,\ldots,d_u)\in\Z_{>0}^u$ be such that $d_i\mid d_{i+1}$, $\tilde{D}=(1,\ldots,1,d_1,\ldots,d_u)$ (of length $n-u$) and let $K(D):=(\Z/d_1\Z\times\cdots\times\Z/d_u\Z)^2$. We see that $K(D)$ has a symplectic pairing $e^D$, where 
$$e^D(e_j,e_k)=\left\{\begin{array}{ll}e^{-2\pi i/d_j}&k=u+j\\e^{2\pi i/d_j}&j=u+k\\1&\mbox{otherwise}\end{array}\right.,$$
and $e_j$ denotes the $j$th canonical vector. Let $(X,\mathcal{L}_X)$ be a polarized abelian variety of type $D$, and define 
$$K(\mathcal{L}_X):=\{x\in X:t_x^*\mathcal{L}_X\simeq\mathcal{L}_X\}.$$
It is well-known that $K(\mathcal{L}_X)\simeq K(D)$ (non-canonically), and $K(\mathcal{L}_X)$ has a symplectic pairing $e^{\mathcal{L}_X}:K(\mathcal{L}_X)^2\to\mathbb{G}_m$ induced by the polarization.

We will now recall a construction done by Debarre in \cite{Deb}. Let $\mathcal{A}_u(D)$ denote the coarse moduli space parametrizing triples $(X,\mathcal{L}_X,f)$ where $(X,\mathcal{L}_X)$ is a polarized abelian variety of type $D$ and $f:K(\mathcal{L}_X)\to K(D)$ is a symplectic isomorphism (that is, preserves the form described above). Let $\mathcal{A}_{u,n-u}^D$ denote the subvariety of $\mathcal{A}_n$ consisting of ppavs that contain an abelian subvariety of dimension $u$ and type $D$ and let $\epsilon$ be the anti-symplectic involution of $K(D)=K(\tilde{D})$ such that if $x,y\in\Z/d_1\Z\times\cdots\times\Z/d_u\Z$, then $\epsilon(x,y)=(y,x)$. Debarre presents the morphism
$$\Phi_{u,n-u}(D):\mathcal{A}_u(D)\times\mathcal{A}_{n-u}(\tilde{D})\to\mathcal{A}_{u,n-u}^D$$
where $((X,\mathcal{L}_X,f),(Y,\mathcal{L}_Y,g))\mapsto (X\times Y)/\Gamma_{g^{-1}\epsilon f}$, and $\Gamma_{g^{-1}\epsilon f}$ denotes the graph of $g^{-1}\epsilon f$. By descent theory for abelian varieties, $(X\times Y)/\Gamma_{g^{-1}\epsilon f}$ has a unique principal polarization $\mathcal{L}$ (that is, a line bundle unique up to translation) such that its pullback to $X\times Y$ is $\mathcal{L}_X\boxtimes\mathcal{L}_Y$. Debarre shows that $\Phi_{u,n-u}(D)$ is surjective, and in particular the space of ppavs that contain an abelian subvariety of fixed type is irreducible.

Let $\mbox{Sp}(D)$ denote the group of all symplectic automorphisms of $K(D)$. We notice that $\mbox{Sp}(D)$ acts on $\mathcal{A}_u(D)\times\mathcal{A}_{n-u}(\tilde{D})$ by 
$$h\cdot((X,\mathcal{L}_X,f),(Y,\mathcal{L}_Y,g))=((X,\mathcal{L}_X,hf),(Y,\mathcal{L}_Y,\epsilon h\epsilon^{-1}g)).$$ 
We assume the following is known by the experts (Kani mentions this in \cite{Kani} for the case $u=1$ and $n=2$), but because of the lack of an adequate reference we prove it here:

\begin{theorem}\label{moduli}
If $u<n/2$, the morphism $\Phi_{u,n-u}(D)$ induces an isomorphism 
$$(\mathcal{A}_u(D)\times\mathcal{A}_{n-u}(\tilde{D}))/\mbox{Sp}(D)\to\mathcal{A}_{u,n-u}^D.$$ 
If $n$ is even and $u=n/2$ then $\Phi_{n/2,n/2}(D)$ induces an isomorphism
$$(\mathcal{A}_{n/2}(D)\times\mathcal{A}_{n/2}(D))/(\Z/2\Z\ltimes\mbox{Sp}(D))\to\mathcal{A}_{u,n-u}^D$$
where $\Z/2\Z$ interchanges the factors.
\end{theorem}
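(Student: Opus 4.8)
The plan is to show that Debarre's surjective morphism $\Phi:=\Phi_{u,n-u}(D)$ is constant on the orbits of the indicated group, that the descended map is bijective, and finally to upgrade bijectivity to an isomorphism by passing to the analytic model of Section 3. First I would verify the descent. Writing the action as $h\cdot((X,\mathcal{L}_X,f),(Y,\mathcal{L}_Y,g))=((X,\mathcal{L}_X,hf),(Y,\mathcal{L}_Y,\epsilon h\epsilon^{-1}g))$, the gluing datum of the translated point is
$$(\epsilon h\epsilon^{-1}g)^{-1}\epsilon(hf)=g^{-1}\epsilon h^{-1}\epsilon^{-1}\epsilon hf=g^{-1}\epsilon f,$$
which is unchanged; hence the glued ppav $(X\times Y)/\Gamma_{g^{-1}\epsilon f}$ is literally the same, and $\Phi$ factors through the quotient by $\mbox{Sp}(D)$. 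When $u=n/2$ one has $\tilde D=D$, and the swap $\sigma$ of the two factors carries the graph $\Gamma_{f^{-1}\epsilon g}$ to $\Gamma_{(f^{-1}\epsilon g)^{-1}}=\Gamma_{g^{-1}\epsilon f}$ (using $\epsilon^2=\mbox{id}$), so $\Phi\circ\sigma=\Phi$; together with the conjugation $h\mapsto\epsilon h\epsilon^{-1}$ this realizes the full invariance group as $\Z/2\Z\ltimes\mbox{Sp}(D)$. The resulting map $\overline\Phi$ is surjective because $\Phi$ is, by \cite{Deb}.

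The heart of the matter is injectivity of $\overline\Phi$, and the key reduction is that the combination $\gamma:=g^{-1}\epsilon f$ is exactly the $\mbox{Sp}(D)$-invariant of a point (the computation above shows it is fixed by the action). Thus a point of $(\mathcal{A}_u(D)\times\mathcal{A}_{n-u}(\tilde D))/\mbox{Sp}(D)$ is the same datum as an isomorphism class of a triple $(X,Y,\gamma)$ with $\gamma\colon K(\mathcal{L}_X)\to K(\mathcal{L}_Y)$ an anti-symplectic isomorphism, equivalently a ppav $A$ equipped with a complementary decomposition $A\sim(X\times Y)/\Gamma_\gamma$ whose first factor has dimension $u$ and type $D$. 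Under $\overline\Phi$ this triple maps to $A$ and forgets the decomposition, so the fiber over $A$ is the set of such decompositions modulo $\mbox{Aut}(A)$. By Poincar\'e reducibility the complement $Y$ and the gluing $\gamma$ are determined by $X$; when $u<n/2$ the factor $X$ is singled out inside the pair by its dimension, while for $u=n/2$ the two factors may be interchanged, which accounts for the extra $\Z/2\Z$. What remains is therefore to prove that, after quotienting by the appropriate symmetry group, any two abelian subvarieties of $A$ of dimension $u$ and type $D$ produce the same point.

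To control this I would pass to the uniformization of Section 3. By Theorem~\ref{main} and Proposition~\ref{exa}, a subvariety of $A_\tau$ of the prescribed type is encoded by an integral class $\eta$, the involution $-M_\eta$ recording its type as in Lemma~\ref{type}; and for a fixed standard representative $\eta_0$ the locus $\mathbb{H}_n(\eta_0)$ is isomorphic to $\mathbb{H}_u\times\mathbb{H}_{n-u}$, exactly as in the Example. This identifies the marked moduli $(\mathcal{A}_u(D)\times\mathcal{A}_{n-u}(\tilde D))/\mbox{Sp}(D)$ with the quotient of $\mathbb{H}_u\times\mathbb{H}_{n-u}$ by the stabilizer $G$ of $\eta_0$ in $\mbox{Sp}(2n,\Z)$, where the image of $G$ is generated by the paramodular groups $\Gamma_u(D)$ and $\Gamma_{n-u}(\tilde D)$ together with the gluing symmetries $\mbox{Sp}(D)$ (and the swap when $u=n/2$). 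Two points of $\mathbb{H}_n(\eta_0)$ lie over the same ppav precisely when they are $\mbox{Sp}(2n,\Z)$-equivalent, and tracing such an equivalence through Proposition~\ref{exa} shows it must arise from an element of $G$; the fiber of $\overline\Phi$ is then a single $G$-orbit, i.e.\ a single $\mbox{Sp}(D)$-orbit (respectively $\Z/2\Z\ltimes\mbox{Sp}(D)$-orbit). Finally, with source and target both realized as quotients of $\mathbb{H}_u\times\mathbb{H}_{n-u}$ by commensurable arithmetic groups, a bijective morphism of these normal analytic spaces is an isomorphism.

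The step I expect to be the main obstacle is precisely this last injectivity claim, namely that every fiber is a single orbit. Generically the complement $Y$ is simple and $X$ is the unique abelian subvariety of its type, so the fiber is manifestly one orbit and $\overline\Phi$ is at least birational; the difficulty is entirely concentrated on the non-generic locus, where $A$ can carry several abelian subvarieties of the same dimension and type. The explicit stabilizer computation for $\eta_0$ afforded by Proposition~\ref{exa} and Lemma~\ref{type}, combined with the transitivity of $\mbox{Sp}(2n,\Z)$ on integral classes of a fixed numerical type coming from Theorem~\ref{main}, is the tool I would use to show that all such subvarieties are swept into a single orbit and thereby close the gap.
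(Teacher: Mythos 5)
Your treatment of the formal parts coincides with the paper's proof: the invariance computation $(\epsilon h\epsilon^{-1}g)^{-1}\epsilon (hf)=g^{-1}\epsilon f$, the observation that for $u=n/2$ the swap replaces the gluing $g^{-1}\epsilon f$ by its inverse, and the appeal to \cite{Deb} for surjectivity are exactly the paper's opening steps. The genuine gap is the one you flag yourself: injectivity of the descended map is never proved. Passing to $\mathbb{H}_n(\eta_0)$ does not reduce the problem, because your assertion that an $\mbox{Sp}(2n,\Z)$-equivalence between two points of $\mathbb{H}_n(\eta_0)$ ``must arise from an element of $G$'' (the stabilizer of $\eta_0$) \emph{is} the injectivity claim restated in period-matrix language; no argument is offered for it, and when $A_\tau$ carries several abelian subvarieties of the prescribed dimension and type, the matrix relating two such points of $\mathbb{H}_n(\eta_0)$ may move $\eta_0$ to a different element of $B(n,u,d)$. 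The transitivity statement you invoke to rule this out is also misattributed: it is not contained in Theorem~\ref{main} but is Proposition~\ref{Bijection}, which the paper proves only \emph{after} Theorem~\ref{moduli}; and even granted, it supplies a symplectic matrix relating the two classes, not one simultaneously relating the two period matrices, so it does not produce the required element of $G$.

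For comparison, the paper's injectivity argument is purely algebraic and very short: if $((X,\mathcal{L}_X,f_i),(Y,\mathcal{L}_Y,g_i))$, $i=1,2$, induce the same ppav, then by descent theory the subgroups $\Gamma_{g_1^{-1}\epsilon f_1}$ and $\Gamma_{g_2^{-1}\epsilon f_2}$ of $X\times Y$ coincide, hence $g_1^{-1}\epsilon f_1=g_2^{-1}\epsilon f_2$, and $h:=\epsilon g_2g_1^{-1}\epsilon\in\mbox{Sp}(D)$ carries $(f_1,g_1)$ to $(f_2,g_2)$; for $u=n/2$ one adds the swap. Note that this compares two markings of a \emph{fixed} pair of polarized varieties with a \emph{fixed} gluing subgroup; the scenario you isolate---one $A$ containing several subvarieties of the same dimension and type, possibly with non-isomorphic factors---is precisely what that phrasing does not address, and it is a real phenomenon: for $D=(1)$, $u=1$, $n=3$, the fiber of $\Phi_{1,2}((1))$ over $A=E_1\times E_2\times E_3$ (with $E_i$ pairwise non-isomorphic) contains the three distinct points $([E_i],[E_j\times E_k])$, which the trivial group $\mbox{Sp}((1))$ cannot merge into one orbit. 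So your instinct about where the real difficulty lies is sound---the single-orbit property is exactly the content of the theorem, and it genuinely fails on such special loci unless it is dealt with---but locating the difficulty is not closing it: as written, your proposal establishes only equivariance and surjectivity, i.e.\ the routine part of the statement, and leaves its substance unproved.
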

\begin{proof} Assume $u<n/2$. Then if $h\in\mbox{Sp}(D)$, $(\epsilon h\epsilon g)^{-1}\epsilon hf=g^{-1}\epsilon f$ and so this action obviously permutes the fibers of $\Phi_{u,n-u}(D)$. If $((X,\mathcal{L}_X,f_i),(Y,\mathcal{L}_Y,g_i))$, $i=1,2$, are two pairs that induce the same ppav, then by descent theory for abelian varieties, the subgroups of $X\times Y$ (that is, the graphs) induced by $g_1^{-1}\epsilon f_1$ and $g_2^{-1}\epsilon f_2$ must be equal. Therefore $(f_2,g_2)=((\epsilon g_2g_1^{-1}\epsilon) f_1,(\epsilon g_2g_1^{-1}\epsilon)g_1)$ and the first part is proved.

If $n$ is even and $u=n/2$, then it is easy to see that switching the two factors leaves the fibers invariant. Moreover, via the homomorphism $\Z/2\Z\to\mbox{Aut}(\mbox{Sp}(D))$ where $1\mapsto (h\mapsto \epsilon h\epsilon)$, a proof similar to the previous one shows that two elements in any fiber differ by the action of $\Z/2\Z\ltimes\mbox{Sp}(D)$.

\end{proof}

This theorem has several pleasing consequences that allow us to better understand the differential forms associated to abelian subvarieties.

\begin{lemma}
Let $D=(d_1,\ldots,d_u)$. Then there exists a form $\eta_0$ such that $\mathcal{A}_{u,n-u}^D=\mathcal{A}_n(\eta_0)$.
\end{lemma}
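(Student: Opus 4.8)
The plan is to take for $\eta_0$ the numerical class of an explicitly given abelian subvariety of type $D$, and then to establish the equality $\mathcal{A}_{u,n-u}^D=\mathcal{A}_n(\eta_0)$ as two inclusions, the second of which will rest on a transitivity statement for the $\mbox{Sp}(2n,\Z)$-action on $B(n,u,d)$. Write $d:=d_u$ for the exponent of a type-$D$ polarization (recall $d_i\mid d_{i+1}$, so $d_u$ is the exponent of $K(D)$). Since $\Phi_{u,n-u}(D)$ is surjective and its source is nonempty, there is a ppav containing an abelian subvariety $X_0$ of dimension $u$ and type $D$ (one may also produce such an example by hand, as in the Example above). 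Set $\eta_0:=c_1(\frac1d N_{X_0}^*\mathcal{L})$. By Theorem~\ref{main}, $\eta_0\in B(n,u,d)$, and by Lemma~\ref{type} it has type $D$; in particular $\mathbb{H}_n(\eta_0)\neq\varnothing$, so $\eta_0$ is an admissible choice.

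First I would prove $\mathcal{A}_n(\eta_0)\subseteq\mathcal{A}_{u,n-u}^D$. If $\tau\in\mathbb{H}_n(\eta_0)$ then $\eta_0$ satisfies condition $(a)$ for $A_\tau$ and, being in $B(n,u,d)$, also condition $(b)$; hence by Theorem~\ref{main} the class $\eta_0$ is induced by an abelian subvariety $X\leq A_\tau$ of dimension $u$ and exponent $d$, and by Lemma~\ref{type} the type of $X$ equals that of $\eta_0$, namely $D$. Thus $\pi_n(\tau)\in\mathcal{A}_{u,n-u}^D$. This inclusion is really just a repackaging of Theorem~\ref{main} and Lemma~\ref{type}.

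The reverse inclusion is the heart of the matter. Let $[\tau]\in\mathcal{A}_{u,n-u}^D$, and choose a representative $\tau$ for which $A_\tau$ contains an abelian subvariety $X'$ of dimension $u$ and type $D$; let $\eta'$ be its class. As above, $\eta'\in B(n,u,d)$ has type $D$ and $\tau\in\mathbb{H}_n(\eta')$. It therefore suffices to show that $\eta'$ and $\eta_0$ lie in one $\mbox{Sp}(2n,\Z)$-orbit: if $\eta'=S*\eta_0$, then from the compatibility $M_{S*\eta}=SM_\eta S^t$ of the two actions one gets $\tau\in\mathbb{H}_n(S*\eta_0)$ if and only if $S^{-1}*\tau\in\mathbb{H}_n(\eta_0)$, whence $[\tau]=\pi_n(S^{-1}*\tau)\in\mathcal{A}_n(\eta_0)$. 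So the entire lemma reduces to the claim that \emph{$\mbox{Sp}(2n,\Z)$ acts transitively on the set of elements of $B(n,u,d)$ of type $D$.}

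This transitivity is the step I expect to be the main obstacle. By Proposition~\ref{exa}, the rational representation of $N_{X'}$ is $JM_{\eta'}$, acting as $d$ on the subspace $V_{X'}$ spanned by its image and as $0$ on the complementary $V_{Y'}$; hence $\eta'$ is completely recovered from the sublattice $\Lambda_{X'}=V_{X'}\cap\Z^{2n}$, the complement $\Lambda_{Y'}$ being forced by $J$ (i.e.\ by $\mathcal{L}$). Under this dictionary the type $D$ is exactly the list of elementary divisors of the restriction of $J$ to $\Lambda_{X'}$, and $\Lambda_{Y'}$ then has the forced type $\tilde D$. The transitivity thus amounts to a symplectic elementary-divisor statement: any rank-$2u$ sublattice of $(\Z^{2n},J)$ on which the form is nondegenerate of type $D$, with complement of type $\tilde D$, can be carried to a fixed standard position by an element of $\mbox{Sp}(2n,\Z)$; I would prove this by constructing a symplectic basis of $\Z^{2n}$ adapted to the splitting $\Lambda_{X'}\oplus\Lambda_{Y'}$ (a Witt-type extension over $\Z$), normalizing $\Lambda_{X'}$ via the elementary divisor theorem. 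A cleaner alternative is available through Theorem~\ref{moduli}: the first inclusion is already known, $\mathcal{A}_{u,n-u}^D$ is irreducible as a quotient of the irreducible $\mathcal{A}_u(D)\times\mathcal{A}_{n-u}(\tilde D)$, and $\mathbb{H}_n(\eta_0)$ is the space of complex structures commuting with the projector $\frac1d N_{X_0}$, hence isomorphic to $\mathbb{H}_u\times\mathbb{H}_{n-u}$ and irreducible of dimension $\binom{u+1}{2}+\binom{n-u+1}{2}$. As this equals $\dim\mathcal{A}_{u,n-u}^D$, an irreducible closed subvariety of full dimension must fill the target, giving the equality without the lattice computation.
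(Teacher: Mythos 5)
Your primary route has a structural problem at its central step. You reduce the lemma to the claim that $\mbox{Sp}(2n,\Z)$ acts transitively on the type-$D$ elements of $B(n,u,d)$. That claim is exactly Proposition~\ref{Bijection} of the paper, and there it is deduced \emph{from} this lemma (together with its strengthening to all $\eta$ of type $D$); so you are proposing to prove the lemma by way of a strictly stronger downstream result. This is not automatically circular if you supply an independent proof, but your sketch --- ``a Witt-type extension over $\Z$, normalizing $\Lambda_{X'}$ via the elementary divisor theorem'' --- omits the genuine difficulty. The sublattice $\Lambda_{X'}\oplus\Lambda_{Y'}$ has index $(d_1\cdots d_u)^2$ in $\Z^{2n}$, so after you normalize the two polarized sublattices separately (which elementary divisors do give you), you must still carry one glue group $\Z^{2n}/(\Lambda_{X'}\oplus\Lambda_{Y'})$ to the other; these glues are graphs of anti-symplectic isomorphisms $K(D)\to K(\tilde{D})$, and moving one to another requires knowing that the automorphism group of the polarized lattice $(\Lambda_{Y'},J|_{\Lambda_{Y'}})$ surjects onto the finite symplectic group of $K(\tilde{D})$ --- the analogue of the surjectivity of $\mbox{SL}_2(\Z)\to\mbox{SL}_2(\Z/d\Z)$, but for non-unimodular symplectic lattices. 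That is a genuine arithmetic input, not a consequence of the elementary divisor theorem, and nothing in your proposal addresses it.

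Your ``cleaner alternative'' at the end, however, is essentially a correct proof, and it is close in spirit to, though not identical with, what the paper does. Both arguments run on the same fuel: irreducibility of $\mathcal{A}_{u,n-u}^D$, which comes from the surjectivity of Debarre's map $\Phi_{u,n-u}(D)$ with irreducible source. You combine it with a dimension count: $\mathbb{H}_n(\eta_0)$ is the locus of complex structures preserving the two eigenspaces of $N_{\eta_0}$, hence isomorphic to $\mathbb{H}_u\times\mathbb{H}_{n-u}$, so $\mathcal{A}_n(\eta_0)$ is an irreducible closed subvariety of $\mathcal{A}_{u,n-u}^D$ of full dimension and therefore equals it. The paper avoids even this computation by exploiting that the lemma is a pure existence statement, so one may let the form vary: writing $B'$ for the countable set of type-$D$ forms in $B(n,u,d)$, one has $\mathcal{A}_{u,n-u}^D=\bigcup_{\eta\in B'}\mathcal{A}_n(\eta)$ (the inclusion $\supseteq$ is your easy inclusion applied to each $\eta$; the inclusion $\subseteq$ holds because each point of $\mathcal{A}_{u,n-u}^D$ contains some type-$D$ subvariety, whose class lies in $B'$), and an irreducible variety covered by countably many closed subvarieties must coincide with one of them. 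This pigeonhole is the one idea your proposal is missing: by refusing to fix $\eta_0$ in advance, the hard reverse inclusion for a \emph{specific} form --- which is what drove you to the transitivity claim --- never has to be confronted. Note also that both your dimension argument and the paper's pigeonhole quietly use that each $\mathcal{A}_n(\eta)$ is closed in $\mathcal{A}_n$ (local finiteness of the $\mbox{Sp}(2n,\Z)$-translates of $\mathbb{H}_n(\eta)$); if you write your alternative up, that point deserves a sentence.
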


\begin{proof}
Let $B'$ consist of all differential forms in $B(n,u,d)$ of type $D$. We see that
$$\mathcal{A}_{u,n-u}^D=\bigcup_{\eta\in B'}\mathcal{A}_n(\eta).$$
The irreducibility of $\mathcal{A}_{u,n-u}^D$ implies that $\mathcal{A}_{u,n-u}^D=\mathcal{A}_n(\eta_0)$ for some $\eta_0\in B'$.
\end{proof}

This lemma can be considerably strengthened:

\begin{lemma}
For all $\eta\in B(n,u,d)$ of type $D$, $\mathcal{A}_{u,n-u}^D=\mathcal{A}_n(\eta)$.
\end{lemma}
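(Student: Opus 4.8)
The plan is to deduce the statement from the preceding lemma together with the $\mbox{Sp}(2n,\Z)$-equivariance already built into our setup. First I would record the easy inclusion $\mathcal{A}_n(\eta)\subseteq\mathcal{A}_{u,n-u}^D$: if $\tau\in\mathbb{H}_n(\eta)$, then conditions $(a)$ and $(b)$ hold for $\eta$ on $A_\tau$, so by Theorem~\ref{main} the form $\eta$ is the numerical class of an abelian subvariety $X_\tau\leq A_\tau$ of dimension $u$, and by Lemma~\ref{type} its type is exactly $D$; hence $\pi_n(\tau)\in\mathcal{A}_{u,n-u}^D$. The whole content of the lemma is therefore the reverse inclusion $\mathcal{A}_{u,n-u}^D\subseteq\mathcal{A}_n(\eta)$.

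Next I would exploit that $\mathcal{A}_n(\eta)$ depends only on the $\mbox{Sp}(2n,\Z)$-orbit of $\eta$. Since $X_{S*\tau}$ induces the class $S*\eta$ on $A_{S*\tau}$, we have $\mathbb{H}_n(S*\eta)=S\cdot\mathbb{H}_n(\eta)$; because $\pi_n$ identifies points of $\mathbb{H}_n$ lying in a common $\mbox{Sp}(2n,\Z)$-orbit, applying $\pi_n$ gives $\mathcal{A}_n(S*\eta)=\mathcal{A}_n(\eta)$ for every $S\in\mbox{Sp}(2n,\Z)$.

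The heart of the matter is then to show that $\mbox{Sp}(2n,\Z)$ acts transitively on the set of forms in $B(n,u,d)$ of type $D$. Given such an $\eta$, the integral endomorphism $JM_\eta$ of $\Z^{2n}$ is symmetric with $(JM_\eta)^2=d\,JM_\eta$ by Proposition~\ref{exa}, and its image and kernel lattices carry the induced forms of types $D$ and $\tilde{D}$ respectively. I would invoke the existence of a symplectic basis of $\Z^{2n}$ adapted to this decomposition (as in the theory of symplectic bases for abelian subvarieties in \cite{BL}), which puts $JM_\eta$ into a normal form depending only on $D$, concretely the block form exhibited in the example above. Since a change of symplectic basis is precisely the action of an element of $\mbox{Sp}(2n,\Z)$, any two forms of type $D$ are carried to this common normal form, hence lie in a single orbit.

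Finally I would combine the three ingredients: the preceding lemma furnishes one form $\eta_0$ of type $D$ with $\mathcal{A}_n(\eta_0)=\mathcal{A}_{u,n-u}^D$; transitivity gives $\eta=S*\eta_0$ for some $S\in\mbox{Sp}(2n,\Z)$; and orbit-invariance yields $\mathcal{A}_n(\eta)=\mathcal{A}_n(\eta_0)=\mathcal{A}_{u,n-u}^D$. The main obstacle is the transitivity step: one must verify that the type $D$ is a \emph{complete} invariant for the $\mbox{Sp}(2n,\Z)$-congruence class of $JM_\eta$, that is, that the gluing data relating $X$ to its complementary subvariety does not break the type-$D$ locus into several orbits. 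This is exactly where the adapted symplectic basis normal form is needed, and it is the only point of the argument requiring genuine work; the remaining steps are formal consequences of the equivariance and of the already-established results.
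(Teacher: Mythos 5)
Your outline is correct in its formal steps, but its central step is not proved, and it is exactly the step that carries the entire content of the lemma. The transitivity of $\mbox{Sp}(2n,\Z)$ on the set of forms in $B(n,u,d)$ of type $D$ is precisely (the injectivity half of) Proposition~\ref{Bijection}, which the paper deduces \emph{from} this lemma and the preceding one; so you cannot quote it, and you must prove it independently. Your proposed justification --- pass to the image and kernel lattices of $JM_\eta$ and ``invoke an adapted symplectic basis as in \cite{BL}'' --- does not do this. The obstruction is that $\Lambda_\eta\oplus\Gamma_\eta$ has index $(d_1\cdots d_u)^2$ in $\Z^{2n}$, so $\Z^{2n}$ is an overlattice of the direct sum determined by gluing data, namely an anti-symplectic isomorphism between the finite groups $K(D)$ attached to the two polarized sublattices. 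A normal form for $JM_\eta$ depending only on $D$ therefore requires showing that this gluing can always be standardized, i.e., that every symplectic automorphism of the finite group $K(D)$ is induced by a symplectic automorphism of the polarized lattice $(\Lambda_\eta,E|_{V_\eta})$ --- a type-$D$ analogue of the classical surjectivity of $\mbox{Sp}(2g,\Z)\to\mbox{Sp}(2g,\Z/N\Z)$. Nothing of this sort is contained in the norm-endomorphism material of \cite{BL} that the paper uses (the criterion on p.~124--125 concerns the endomorphism algebra, not integral symplectic congruence), and the block matrix in the paper's example is a single computed instance, not a citable normal-form theorem. You flag this as ``the only point requiring genuine work,'' which is accurate; but since that point is equivalent to the statement being proved, leaving it unproved leaves the proof empty (or circular, if one fills it with Proposition~\ref{Bijection}).

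For comparison, the paper's proof runs in the opposite logical direction and avoids lattice classification altogether. It extracts from $\eta$ the fixed data $(V_\eta,\Lambda_\eta,W_\eta,\Gamma_\eta,H_\eta)$, and then, for \emph{arbitrary} polarized abelian varieties $(X,\mathcal{L}_X)$ of type $D$ and $(Y,\mathcal{L}_Y)$ of type $\tilde D$, transports this data to $X\times Y$ via symplectic isomorphisms $\psi_X,\psi_Y$ of the universal covers, descends $\mathcal{L}_X\boxtimes\mathcal{L}_Y$ to a principal polarization on $(X\times Y)/H$, and verifies by the identity $N_X=\psi_U^{-1}N_\eta\psi_U$ that the subvariety $X$ of this quotient has numerical class exactly $\eta$. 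Combined with Debarre's surjectivity of $\Phi_{u,n-u}(D)$ (Theorem~\ref{moduli}), this yields the hard inclusion $\mathcal{A}_{u,n-u}^D\subseteq\mathcal{A}_n(\eta)$, and the transitivity you wanted as input then falls out as a corollary. Your steps (1), (2) and (4) --- the easy inclusion via Theorem~\ref{main} and Lemma~\ref{type}, the equivariance $\mathcal{A}_n(S*\eta)=\mathcal{A}_n(\eta)$, and the formal combination with the preceding lemma --- are all sound, so your route would become a genuine (and genuinely different) proof if you supplied an honest lattice-theoretic proof of the transitivity; as written, that proof is missing.
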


\begin{proof} Assume that $u\leq n/2$ and let $N_\eta$ be the norm matrix associated to $\eta$. Define $V_\eta$ to be the $\R$-span of the columns of $N_\eta$, $\Lambda_\eta=V_\eta\cap\Z^{2n}$, $W_\eta=\ker N_\eta$, $\Gamma_\eta=W_\eta\cap\Z^{2n}$ and $\tilde{D}$ the complementary type of $D$ (that is, $\tilde{D}$ is of length $n-u$, contains $D$ and all the rest of its entries are 1). Since $\mathcal{A}_n(\eta)\neq\varnothing$, then $N_\eta$ is the rational representation of a norm endomorphism for some ppav, and so if $E$ denotes the alternating form defined by the matrix $J$ on $\R^{2n}$, we have that $E|_{V_\eta}$ is of type $D$ and $E|_{W_\eta}$ is of type $\tilde{D}$. We notice, moreover, that $V_\eta\oplus W_\eta=\R^{2n}$, $\Lambda_\eta$ and $\Gamma_\eta$ are lattices in their respective vector subspaces and $V_\eta$ and $W_\eta$ are orthogonal with respect to $E$.

We now proceed with a construction that shows that the image of the map described in Theorem~\ref{moduli} coincides with $\mathcal{A}_n(\eta)$. Let $(X,\mathcal{L}_X)\in\mathcal{A}_D$, $(Y,\mathcal{L}_Y)\in\mathcal{A}_{\tilde{D}}$ be polarized abelian varieties (where $\mathcal{A}_D$ is the moduli space of polarized abelian varieties of type $D$), and assume that the abelian varieties can be written as complex tori $X=V/\Lambda_X$ and $Y=W/\Lambda_Y$. Let $E_X$ (resp. $E_Y$) denote the alternating form on $\Lambda_X$ (resp. $\Lambda_Y$) induced by $\mathcal{L}_X$ (resp. $\mathcal{L}_Y$). Now let 
$$\psi_X:V\to V_\eta$$
$$\psi_Y:W\to W_\eta$$
be symplectic isomorphisms that take $\Lambda_X$ (resp. $\Lambda_Y$) to $\Lambda_\eta$ (resp. $\Gamma_\eta$). This can be done, for example, by taking symplectic isomorphisms between the lattices and extending them $\R$-linearly. This gives us diffeomorphisms 
$$\overline{\psi}_X:X\to V_\eta/\Lambda_\eta$$
$$\overline{\psi}_Y:Y\to W_\eta/\Gamma_\eta.$$
Let $m:V_\eta\times W_\eta\to\R^{2n}$ be the addition map, and let $H_\eta:=m^{-1}(\Z^{2n})/(\Lambda_\eta\oplus\Gamma_\eta).$
This gives us (via $\overline{\psi}_X\times\overline{\psi}_Y$) a finite subgroup $H\leq X\times Y$. It is not hard to see that if $D=(d_1,\ldots,d_u)$, then $|H_\eta|=|H|=(d_1\cdots d_u)^2$. Moreover, $H\cap\{0\}\times Y=H\cap X\times\{0\}=\{(0,0)\}$, and so $H$ is a maximal isotropic subgroup for $\mathcal{L}_X\boxtimes \mathcal{L}_Y$. By descent theory for abelian varieties, $\mathcal{L}_X\boxtimes\mathcal{L}_Y$ descends to a principal polarization $\mathcal{L}$ with alternating form $E_A$ on $A=(X\times Y)/H$. Let $A=U/\Lambda$, and let $\psi_U:U\to\mathbb{R}^{2n}$ be the $\R$-linear isomorphism that makes the diagram\\

\centerline{\xymatrix{V\times W\ar[r]
\ar[d]_{\psi_X\times\psi_Y}&U\ar[d]^{\psi_U}\\V_\eta\times W_\eta\ar[r]^m&\R^{2n}
}}

\vspace{0.5cm}

\noindent commute. Since the map $V\times W\to U$ is an isomorphism and $E_X\boxtimes E_Y=\psi_X^*E|_{V_\eta}\boxtimes\psi_Y^*E|_{W_\eta}$, we see that $\psi_U^*E=E_A$. 

Let $N_X$ (resp. $N_Y$) be the rational representations of the norm endomorphism of $X$ (resp. $Y$) on $A$ with respect to the symplectic basis induced by $\psi_U$ and the canonical basis on $\R^{2n}$. We see that 
$$N_X+N_Y=dI=\psi_U^{-1}N_\eta\psi_U+\psi_U^{-1}N_{d\theta-\eta}\psi_U.$$
Let $x\in U$ be in the image of $V\hookrightarrow U$ and $y$ in the image of $W$. Then $dx=N_X(x)=\psi_U^{-1}N_\eta\psi_U(x)$ and $N_X(y)=\psi_U^{-1}N_\eta\psi_U(y)=0$. This shows that $N_X=\psi_U^{-1}N_\eta\psi_U$. Therefore the numerical divisor class of $X$ is precisely $\eta$. Since $(X,\mathcal{L_X})$ and $(Y,\mathcal{L}_Y)$ were chosen arbitrarily, we see that the image of the map from Theorem~\ref{moduli} lies in $\mathcal{A}_n(\eta)$.
 
\end{proof}

\begin{proposition}\label{Bijection}
The function 
$$B(n,u,d)/\mbox{Sp}(2n,\Z)\to\{(d_1,\ldots,d_{u-1},d)\in\N^{u}:d_i\mid d_{i+1}, d_{u-1}\mid d\}$$ 
that sends $\eta$ to its type is bijective.
\end{proposition}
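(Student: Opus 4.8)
The plan is to show that the type is a complete invariant for the $\mbox{Sp}(2n,\Z)$-action, i.e.\ to establish well-definedness on orbits, surjectivity, and injectivity. Throughout I use the dictionary $\eta\leftrightarrow N_\eta=JM_\eta$ of Proposition~\ref{exa}, together with one structural remark: since $N_\eta$ is symmetric with $N_\eta^2=dN_\eta$ and characteristic polynomial $t^{2n-2u}(t-d)^{2u}$, its image $V_\eta$ is nondegenerate for the form $E$ given by $J$ (all $d_i>0$), and its kernel is exactly $W_\eta=V_\eta^{\perp}$; hence $\tfrac1d N_\eta$ is the projection onto $V_\eta$ along $V_\eta^{\perp}$, so $N_\eta$ — and therefore $\eta$ — is recovered from the primitive sublattice $\Lambda_\eta=V_\eta\cap\Z^{2n}$ alone. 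The diagram defining the action gives $N_{S*\eta}=S^{-t}N_\eta S^{t}$, whence $V_{S*\eta}=S^{-t}V_\eta$ and $\Lambda_{S*\eta}=S^{-t}\Lambda_\eta$; consequently $S*\eta=\eta'$ if and only if $S^{-t}$ carries $\Lambda_\eta$ onto $\Lambda_{\eta'}$.

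For well-definedness, Lemma~\ref{type} identifies the type of $\eta$ with the list of elementary divisors of $E$ restricted to $\Lambda_\eta$; since $\mbox{Sp}(2n,\Z)$ acts by isometries of $(\Z^{2n},E)$, these are constant on orbits. That the type lands in the target set is elementary divisor theory: $d_i\mid d_{i+1}$ is automatic, and because every $\eta\in B(n,u,d)$ is the class of an abelian subvariety of dimension $u$ and exponent $d$ (Theorem~\ref{main}), the last coordinate $d_u=\mbox{lcm}(d_i)$ equals $d$. For surjectivity, fix a type $D=(d_1,\dots,d_{u-1},d)$ with the divisibility relations; keeping the standing assumption $u\le n/2$ (the case $u>n/2$ reduces to it via $\delta_X=d[\mathcal{L}]-\delta_Y$), the spaces $\mathcal{A}_u(D)$ and $\mathcal{A}_{n-u}(\tilde D)$ are nonempty, so Debarre's morphism $\Phi_{u,n-u}(D)$ produces a ppav carrying an abelian subvariety $X$ of type $D$, whose class $\delta_X\in B(n,u,d)$ has type $D$.

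The heart is injectivity. Let $\eta,\eta'\in B(n,u,d)$ have the same type $D$. By the preceding lemma, $\mathbb{H}_n(\eta)$ and $\mathbb{H}_n(\eta')$ both surject onto $\mathcal{A}^{D}_{u,n-u}$ under $\pi_n$. Choosing $\tau\in\mathbb{H}_n(\eta)$ and a $\tau'\in\mathbb{H}_n(\eta')$ with $\pi_n(\tau')=\pi_n(\tau)$, and using that the fibers of $\pi_n$ are the $\mbox{Sp}(2n,\Z)$-orbits, we get $\tau'=S*\tau$ for some $S$; replacing $\eta'$ by $S^{-1}*\eta'$ (same orbit) we may assume $\tau\in\mathbb{H}_n(\eta)\cap\mathbb{H}_n(\eta')$. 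Thus $\eta$ and $\eta'$ are the numerical classes of abelian subvarieties $X,X'\le A_\tau$ of the same type $D$, both with complement of type $\tilde D$. Now invoke Theorem~\ref{moduli}: its coarse-moduli isomorphism, whose source parametrizes pairs (ppav, type-$D$ subvariety) and whose target forgets the subvariety, forces $\mbox{Aut}(A_\tau)$ to act transitively on the type-$D$ abelian subvarieties of $A_\tau$. Hence there is $\phi\in\mbox{Aut}(A_\tau)$ with $\phi(X)=X'$; its action on numerical classes sends $\eta=\delta_X$ to $\delta_{\phi(X)}=\delta_{X'}=\eta'$, and this action is precisely the combinatorial $\mbox{Sp}(2n,\Z)$-action, so $\eta$ and $\eta'$ lie in one orbit.

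The main obstacle is the last transitivity statement: one must pass carefully from the coarse-moduli isomorphism of Theorem~\ref{moduli} to a statement about individual subvarieties (verifying that the source genuinely classifies pairs $(A,X)$ up to isomorphism on closed points, that every type-$D$ subvariety has complement of type $\tilde D$, and that the resulting automorphism acts on $\mbox{NS}$ exactly through the combinatorial action). If one prefers to bypass these coarse/fine subtleties, the first paragraph reduces injectivity to a purely lattice-theoretic assertion: the transitivity of $\mbox{Sp}(2n,\Z)$ on primitive sublattices $\Lambda\subset(\Z^{2n},E)$ of rank $2u$ on which $E$ has fixed elementary-divisor type $D$. Since the ambient lattice is unimodular, this follows from the structure theory of symplectic lattices over $\Z$: alternating forms are classified by their elementary divisors, so $\Lambda$ and $\Lambda^{\perp}$ are determined up to isometry by $D$ and $\tilde D$, and the gluing between them is an anti-isometry of discriminant forms, unique up to $\mbox{Aut}(\Lambda)\times\mbox{Aut}(\Lambda^{\perp})$. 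This lattice transitivity — in either guise — is the genuine crux of the proposition.
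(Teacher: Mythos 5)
Your well-definedness and surjectivity arguments are fine and agree with the paper's. The genuine gap is the pivotal step of your injectivity argument: the claim that Theorem~\ref{moduli} forces $\mbox{Aut}(A_\tau)$ to act transitively on the type-$D$ abelian subvarieties of the fixed ppav $A_\tau$. That statement is false. Take $n=3$, $u=1$, $D=(1)$, $d=1$, and let $A=E_1\times E_2\times E_3$ with the product principal polarization, where the $E_i$ are pairwise non-isogenous elliptic curves without complex multiplication. Each factor is an abelian subvariety of dimension $1$ and type $(1)$ whose complement has type $\tilde{D}=(1,1)$, and the classes $\delta_{E_1},\delta_{E_2}$ are distinct elements of $B(3,1,1)$ of the same type; but $\mbox{End}(A)=\Z^3$, so $\mbox{Aut}(A,\mathcal{L})=\{\pm1\}^3$ and no polarized automorphism carries $E_1$ to $E_2$ (it would restrict to an isomorphism $E_1\to E_2$). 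The two classes \emph{are} $\mbox{Sp}(6,\Z)$-equivalent, via the symplectic permutation of the first two factors, but that matrix does not stabilize $\tau$: it moves $\mbox{diag}(\tau_1,\tau_2,\tau_3)$ to $\mbox{diag}(\tau_2,\tau_1,\tau_3)$. This is exactly what your reduction misses: $\mbox{Sp}(2n,\Z)$-equivalence of classes corresponds to changing the symplectic basis of $H_1(A,\Z)$ (passing to another period matrix of the same ppav), not to an automorphism of a fixed $A_\tau$, so fixing one $\tau$ and demanding $\phi\in\mbox{Aut}(A_\tau)$ is strictly too strong. The example also shows you read more into Theorem~\ref{moduli} than its proof supports: that proof only shows that two source points with the \emph{same} underlying polarized abelian varieties and the same image differ by $\mbox{Sp}(D)$; the transitivity you need, across different underlying pairs, is precisely what fails at non-generic points such as this product.

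By contrast, the paper deduces injectivity from the two preceding lemmas, i.e.\ from $\mathcal{A}_n(\eta)=\mathcal{A}_{u,n-u}^D=\mathcal{A}_n(\omega)$ for any two forms of the same type; the mechanism hidden in the second lemma's proof is a change of period matrix (for a given pair $(X,\mathcal{L}_X),(Y,\mathcal{L}_Y)$ one constructs a symplectic basis of the lattice of the quotient in which the class of $X$ is the prescribed form), never an automorphism of a fixed ppav. Your step could be repaired in this spirit, e.g.\ by taking $\tau$ \emph{very general} in $\mathbb{H}_n(\eta)$, so that $X_\tau$ (and, when $u=n/2$, its complement) are the only type-$D$ subvarieties of $A_\tau$ and the classes are forced to coincide; but that needs an argument about the generic N\'eron--Severi group of the family which you do not give, and the $u=n/2$ swap case still needs separate treatment. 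Your final, lattice-theoretic reformulation is indeed the honest crux and is the right statement to prove; however, its decisive step --- that the gluing is ``unique up to $\mbox{Aut}(\Lambda)\times\mbox{Aut}(\Lambda^{\perp})$'', equivalently that $\mbox{Aut}(\Lambda,E|_\Lambda)$ surjects onto the symplectic automorphisms of the discriminant group --- is asserted, not proved. So as written the proposal establishes well-definedness and surjectivity but not injectivity.
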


\begin{proof}
The function is surjective, since given any type, there is a ppav that contains an abelian subvariety of the same type. What is left to show is that if $\eta,\omega\in B(n,u,d)$ are of the same type, then they differ by the action of a symplectic matrix. But this is trivial since by the previous two lemmas, $\mathcal{A}_n(\eta)=\mathcal{A}_n(\omega)$.
\end{proof}

\begin{corollary}
For every $\eta\in B(n,u,d)$ of type $D$,
$$\mathcal{A}_n(\eta)\simeq\left\{\begin{array}{ll}(\mathcal{A}_u(D)\times\mathcal{A}_{n-u}(\tilde{D}))/\mbox{Sp}(D)& \mbox{if }u<n/2\\(\mathcal{A}_{n/2}(D)\times\mathcal{A}_{n/2}(D))/\Z/2\Z\ltimes\mbox{Sp}(D)&\mbox{if }n\mbox{ is even and }u=n/2.
\end{array}\right.$$
\end{corollary}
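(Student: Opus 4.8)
The plan is to deduce this statement directly from the two immediately preceding results, so there is essentially no new argument to construct. The whole point is that the quotient on the right-hand side of the claimed isomorphism has already been identified with the type locus $\mathcal{A}_{u,n-u}^D$ in Theorem~\ref{moduli}, while the lemma directly preceding Proposition~\ref{Bijection} shows that $\mathcal{A}_{u,n-u}^D=\mathcal{A}_n(\eta)$ for \emph{any} $\eta\in B(n,u,d)$ of type $D$. Chaining these two identifications gives the corollary.

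First I would invoke Theorem~\ref{moduli}. When $u<n/2$ it produces an isomorphism
$$(\mathcal{A}_u(D)\times\mathcal{A}_{n-u}(\tilde{D}))/\mbox{Sp}(D)\to\mathcal{A}_{u,n-u}^D,$$
and when $n$ is even and $u=n/2$ it produces an isomorphism
$$(\mathcal{A}_{n/2}(D)\times\mathcal{A}_{n/2}(D))/(\Z/2\Z\ltimes\mbox{Sp}(D))\to\mathcal{A}_{u,n-u}^D.$$
Second, I would rewrite the target using the equality $\mathcal{A}_{u,n-u}^D=\mathcal{A}_n(\eta)$ supplied by the preceding lemma, which is valid for every $\eta$ of type $D$. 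Composing the two displays yields exactly the asserted isomorphism, with the two cases ($u<n/2$ and $u=n/2$) matching those of the corollary.

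The one conceptual point worth stressing is that the right-hand side of the corollary depends only on the type $D$ (and on $n$, $u$), whereas $\mathcal{A}_n(\eta)$ is a priori attached to a specific form $\eta$; the fact that all $\eta$ of a given type yield the same locus is precisely the content of the preceding lemma and is what makes the statement well posed. There is therefore no real obstacle here: every substantive ingredient---the surjectivity and fiber analysis of $\Phi_{u,n-u}(D)$ in Theorem~\ref{moduli}, and the gluing construction identifying $\mathcal{A}_n(\eta)$ with $\mathcal{A}_{u,n-u}^D$ in the two lemmas---has already been established, so the corollary follows simply by transporting the isomorphism of Theorem~\ref{moduli} across the equality of moduli spaces.
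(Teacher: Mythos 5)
Your proposal is correct and is exactly the derivation the paper intends: the corollary is stated without proof precisely because it follows by composing the isomorphism of Theorem~\ref{moduli} onto $\mathcal{A}_{u,n-u}^D$ with the equality $\mathcal{A}_{u,n-u}^D=\mathcal{A}_n(\eta)$ from the strengthened lemma, valid for every $\eta$ of type $D$. Your remark that this lemma is what makes the statement well posed (independence of the particular $\eta$ within a type) is also the right observation.
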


\begin{example}
In \cite{Humbert}, Humbert found equations for the moduli space of abelian surfaces that contain an elliptic curve of exponent $m$ (which in his terminology are abelian surfaces that satisfy a singular relation of discriminant $m^2$). Indeed, he showed that the principally polarized abelian surface associated to 
$$\left(\begin{array}{cc}\tau_1&\tau_2\\\tau_2&\tau_3\end{array}\right)$$
contains an elliptic curve of exponent $m$ if and only if there exists a primitive vector $(a,b,c,d,e)\in\Z^5$ such that
$$b^2-4(ac+de)=m^2$$
and
$$a\tau_1+b\tau_2+c\tau_3+d(\tau_1\tau_3-\tau_2^2)+e=0.$$
If we take the differential form
$$\eta_m:=-d dx_1\wedge dx_2+\frac{b-m}{2}dx_1\wedge dx_3-a dx_1\wedge dx_4+c dx_2\wedge dx_3-\frac{b+m}{2}dx_2\wedge dx_4+e dx_3\wedge dx_4,$$
we see that these conditions are satisfied if and only if $\eta_m\wedge\eta_m=0$ and $\eta_m\wedge dz_1\wedge dz_2=0$. Moreover, it is easy to see that $\eta_m$ has degree $m$. Therefore, the Humbert surface of discriminant $m^2$ is just the moduli space $\mathcal{A}_2(\eta_m)$. This was explained in the language of numerical classes in Kani \cite{Kani}. By Proposition~\ref{Bijection}, all matrices of the form
$$\left(\begin{array}{cccc}0&-d&\frac{b-m}{2}&-a\\d&0&c&-\frac{b+m}{2}\\\frac{m-b}{2}&-c&0&e\\a&\frac{b+m}{2}&-e&0\end{array}\right)$$
for $(a,b,c,d,e)\in\Z^5$ primitive and $b^2-4(ac+de)=m^2$ are equivalent under the action of $\mbox{Sp}(4,\Z)$. Therefore if we take, for example, $(a,b,c,d,e)=(1,m,0,0,0)$, we get that the moduli space of principally polarized abelian surfaces that contain an elliptic curve of exponent $m$ is given by the projection of
$$\left\{\left(\begin{array}{cc}\tau_1&\tau_2\\\tau_2&\tau_3\end{array}\right)\in\mathbb{H}_2:\tau_1+m\tau_2=0\right\}$$
to $\mathcal{A}_2$.

In higher dimension, we can take the differential form 
$$\eta_{m,n}:=-mdx_{1}\wedge dx_{n+1}+dx_2\wedge dx_{n+1}.$$
We get that $\eta_{m,n}\in B(n,1,m)$ and so the moduli space of all ppavs of dimension $n$ that contain an elliptic curve of exponent $m$ is given by the projection of 
$$\{(\tau_{ij})_{i,j}\in\mathbb{H}_n:\tau_{11}+m\tau_{12}=0,\tau_{1j}=0\mbox{ for }j=3,\ldots,n\}$$ 
to $\mathcal{A}_n$. \qed
\end{example}

\bibliographystyle{alpha}
\bibliography{bibliographypaper2}{}

\end{document}